\author{Matěj Dostál}
\address{Department of Mathematics, Faculty of Electrical Engineering, Czech Technical University
         in Prague, Czech Republic}
\email{dostamat@math.feld.cvut.cz}
\title{B\'{e}nabou's theorem for pseudoadjunctions}
\newcommand{\tensor}{\otimes}
\newcommand{\Urar}{\rotatebox[origin=c]{45}{$\Rrightarrow$}}
\newcommand{\Llar}{\rotatebox[origin=c]{180}{$\Rrightarrow$}}
\newcommand{\Ldar}{\rotatebox[origin=c]{225}{$\Rrightarrow$}}
\newcommand{\Ddar}{\rotatebox[origin=c]{-90}{$\Rrightarrow$}}
\newcommand{\uRar}{\rotatebox[origin=c]{45}{\, $\Rightarrow$}}
\newcommand{\dDar}{\Downarrow}
\newcommand{\gkat}[1]{\mathbf{#1}}
\newcommand{\kat}[1]{\mathscr{#1}}
\newcommand{\A}{\kat{A}}
\newcommand{\I}{\kat{I}}
\newcommand{\V}{\kat{V}}
\newcommand{\X}{\kat{X}}
\newcommand{\Gray}{\gkat{Gray}}
\renewcommand{\phi}{\varphi}
\newcommand{\eps}{\varepsilon}
\newcommand{\lift}[1]{\widehat{#1}}
\newcommand{\co}{{\mathit{co}}}
\newcommand{\op}{{\mathit{op}}}
\newcommand{\blank}{{-}}
\newcommand{\adj}{\dashv} 
\tikzset{middle/.style={anchor=center}}
\tikzset{shiftarr/.style={
        rounded corners,%
        to path={--([#1]\tikztostart.center)
                     -- ([#1]\tikztotarget.center) 
                     \tikztonodes
                     -- (\tikztotarget)},
}}
\theoremstyle{plain}
\newtheorem{theorem}{Theorem}[section]
\newtheorem{lemma}[theorem]{Lemma}
\theoremstyle{definition}
\newtheorem{definition}[theorem]{Definition}
\newtheorem{remark}[theorem]{Remark}
\definecolor{cof}{RGB}{219,144,71}
\definecolor{pur}{RGB}{186,146,162}
\definecolor{greeo}{RGB}{91,173,69}
\definecolor{greet}{RGB}{52,111,72}
\numberwithin{equation}{section}
\begin{document}

\begin{abstract}
We give a formal account of B\'{e}nabou's theorem for pseudoadjunctions in the context of $\Gray$-categories. We prove that to give a pseudoadjunction $F \adj U: A \to X$ with unit $\eta$ in a $\Gray$-category $\gkat{K}$ is precisely to give an absolute left (Kan) pseudoextension $U$ of $1_X$ along $F$ witnessed by $\eta$.
\end{abstract}

\maketitle

B\'enabou showed in~\cite{benabou} that an adjunction \[
\begin{tikzcd}
\A
\arrow[d, bend left, "U"]
\arrow[d, phantom, "\adj"]
\\
\X
\arrow[u, bend left, "F"]
\end{tikzcd}
\]
(with unit $\eta$) between categories $\A$ and $\X$ can be characterised as an \emph{absolute left Kan extension}
\[
\begin{tikzcd}
\X
\arrow[r, "F"]
\arrow[dr, "1", swap, ""{name=beg, middle}]
\arrow[dr, phantom, bend left, "\eta \uRar"{middle}]
&
|[alias=A]|\A
\arrow[d, "U", ""{name=fin, middle}]
\\
&
\X
\end{tikzcd}
\]
\emph{of $1$ along $F$}. In this note we are interested in proving a correspondence very similar to the above: a \emph{pseudoadjunction} $F \adj U: A \to X$ (below left)
\[
\begin{tikzcd}
A
\arrow[d, bend left, "U"]
\arrow[d, phantom, "\adj"]
\\
X
\arrow[u, bend left, "F"]
\end{tikzcd}
\qquad
\begin{tikzcd}
X
\arrow[r, "F"]
\arrow[dr, "1", swap, ""{name=beg, middle}]
\arrow[dr, phantom, bend left, "\eta \uRar"{middle}]
&
|[alias=A]|A
\arrow[d, "U", ""{name=fin, middle}]
\\
&
X
\end{tikzcd}
\]
\emph{in a $\Gray$-category $\gkat{K}$} is precisely an \emph{absolute left pseudoexension} (above right) \emph{of $1$ along $F$} in $\gkat{K}$.

Thus our aim is to reproduce B\'{e}nabou's result for the \emph{weaker} notion of a pseudoadjunction. Pseudoadjunctions are interesting: they abound, e.g., in the study of pseudomonads. An important example is the theory of free cocompletions of categories~\cite{kelly:book}. The notion of a pseudoextension already appears in~\cite{nunes-pseudo-kan}.

Instead of working with 2-categories, pseudofunctors and pseudonatural transformations and studying pseudoadjunctions in this setting, we work in the framework of $\Gray$-categories, that is, categories enriched in the category $\V = \Gray$ of 2-categories and 2-functors, equipped with $\Gray$-tensor product~\cite{gray-adjointness-2-categories}.

We will structure this note as follows:
\begin{itemize}
\item The necessary background is covered in Section~\ref{sec:background}.
\item Section~\ref{sec:pseudo-notions-in-gray-cats} contains the definitions of pseudoadjunctions and pseudoextensions.
\item The proof of B\'{e}nabou's theorem appears in Section~\ref{sec:benabou}.
\end{itemize} 

\section{Background}
\label{sec:background}

In this note we shall work with $\Gray$-categories, i.e., categories enriched in the category $\V = \Gray$ (see~\cite{kelly:book} for the theory of enriched categories), where $\Gray$ is a symmetric monoidal closed category of 2-categories with the $\Gray$ tensor product (as described in~\cite{gordon+power+street} or~\cite{gurski}).

\subsection*{$\Gray$-categories}

Recall that a $\Gray$-category $\gkat{K}$ with objects $A$, $B$, $C$, has a hom-2-category $\gkat{K}(A,B)$ for each pair $A$, $B$ of objects, the unit 2-functor $u_{A}: \I \to \gkat{K}(A,A)$ sends the unique $i$-cell ($i = 0,1,2$) to the identity $(i+1)$-cell of $A$ (e.g., the unique object $*$ of $\I$ gets sent to $1_{A}: A \to A$), and the \emph{composition}
\[
\gkat{K}(B,C) \tensor \gkat{K}(A,B) \to \gkat{K}(A,C)
\]
is essentially the composition \emph{cubical} functor
\[
\gkat{K}(B,C) \times \gkat{K}(A,B) \to \gkat{K}(A,C)
\]
yielding, for any $F$ in $\gkat{K}(A,B)$ and any $G$ 
in $\gkat{K}(B,C)$, two 2-functors
\begin{align*}
(\blank) F: \gkat{K}(B,C) & \to 
\gkat{K}(A,C) 
\\
G (\blank): \gkat{K}(A,B) & \to 
\gkat{K}(A,C)
\end{align*}
(``precomposition'' and ``postcomposition''), with $(\blank) F$ acting on the data
\[
\begin{tikzcd}
B
\arrow[rr, bend left, "H"]
\arrow[rr, bend right, "K", swap]
\arrow[phantom, rr, bend left=15, "\tau"{middle}]
\arrow[phantom, rr, "\alpha \Downarrow \, \Rrightarrow \, \Downarrow \beta"{middle}]
&
&
C
\end{tikzcd}
\]
to give
\[
\begin{tikzcd}
B
\arrow[rr, bend left, "HF"]
\arrow[rr, bend right, "KF", swap]
\arrow[phantom, rr, bend left=15, "\tau F"{middle}]
\arrow[phantom, rr, "\alpha F \Downarrow \, \Rrightarrow \, \Downarrow \beta F"{middle}]
&
&
C
\end{tikzcd}
\]
and $G (\blank)$
acting on
\[
\begin{tikzcd}
A
\arrow[rr, bend left, "H"]
\arrow[rr, bend right, "K", swap]
\arrow[phantom, rr, bend left=15, "\tau"{middle}]
\arrow[phantom, rr, "\alpha \Downarrow \, \Rrightarrow \, \Downarrow \beta"{middle}]
&
&
B
\end{tikzcd}
\]
to give
\[
\begin{tikzcd}
A
\arrow[rr, bend left, "GH"]
\arrow[rr, bend right, "GK", swap]
\arrow[phantom, rr, bend left=15, "G \tau"{middle}]
\arrow[phantom, rr, "G \alpha \Downarrow \, \Rrightarrow \, \Downarrow G \beta"{middle}]
&
&
B.
\end{tikzcd}
\]
The 2-functors $(\blank) F$ and $G (\blank)$ are subject to the equality
\[
(G) F = G (F) = GF
\]
and for every pair
\[
\begin{tikzcd}
A
\arrow[r, bend left, "F"]
\arrow[r, bend right, "F'", swap]
\arrow[phantom, r, "\alpha \dDar"{middle}]
&
B
\end{tikzcd}
\qquad
\begin{tikzcd}
B
\arrow[r, shift left, bend left, "G"]
\arrow[r, bend right, "G'", swap]
\arrow[phantom, r, "\beta \dDar"{middle}]
&
C
\end{tikzcd}
\]
there is an isomorphism
\[
\begin{tikzcd}
G F
\arrow[Rightarrow, r, "\beta F"]
\arrow[Rightarrow, d, "G \alpha", swap]
&
G' F
\arrow[phantom, dl, "\Rrightarrow"{middle, 
rotate=225}, 
"\beta_\alpha"{above left}]
\arrow[Rightarrow, d, "G' \alpha"]
\\
G F'
\arrow[Rightarrow, r, "\beta F'", swap]
&
G' F'
\end{tikzcd}
\]
subject to the cubical functor axioms:
\begin{enumerate}
\item Composition axioms
\[
\begin{tikzcd}[arrows=Rightarrow]
GF
\arrow[r, "\beta F"]
\arrow[d, "G \alpha", swap]
&
G'F
\arrow[d, "G' \alpha"]
\arrow[phantom, dl, "\Ldar"{middle}, "\beta_\alpha"{below right}]
\\
GF'
\arrow[r, "\beta F'", swap]
\arrow[d, "G \gamma", swap]
&
G'F'
\arrow[d, "G' \gamma"]
\arrow[phantom, dl, "\Ldar"{middle}, "\beta_\gamma"{below right}]
\\
GF''
\arrow[r, "\beta F''", swap]
&
G'F''
\end{tikzcd}
\quad
=
\quad
\begin{tikzcd}[arrows=Rightarrow]
GF
\arrow[r, "\beta F"]
\arrow[d, "G \alpha", swap]
&
G'F
\arrow[d, "G' \alpha"]
\arrow[phantom, ddl, "\Ldar"{middle}, "\beta_{\gamma \cdot \alpha}"{below right}]
\\
GF'
\arrow[d, "G \gamma", swap]
&
G'F'
\arrow[d, "G' \gamma"]
\\
GF''
\arrow[r, "\beta F''", swap]
&
G'F''
\end{tikzcd}
\]
and
\[
\begin{tikzcd}[arrows=Rightarrow]
GF
\arrow[r, "\beta F"]
\arrow[d, "G \alpha", swap]
&
G'F
\arrow[r, "\delta F"]
\arrow[d, "G' \alpha"]
\arrow[phantom, dl, "\Ldar"{middle}, 
"\beta_\alpha"{below right}]
&
G''F
\arrow[d, "G'' \alpha"]
\arrow[phantom, dl, "\Ldar"{middle}, "\delta_\alpha"{below right}]
\\
GF'
\arrow[r, "\beta F'", swap]
&
G'F'
\arrow[r, "\delta F'", swap]
&
G''F'
\\
&
{=}
&
\end{tikzcd}
\]
\[
\begin{tikzcd}[arrows=Rightarrow]
GF
\arrow[r, "\beta F"]
\arrow[d, "G \alpha", swap]
&
G'F
\arrow[r, "\delta F"]
&
G''F
\arrow[d, "G'' \alpha"]
\arrow[phantom, dll, "\Ldar"{middle}, "{(\delta \cdot \beta)}_\alpha"{below right}]
\\
GF'
\arrow[r, "\beta F'", swap]
&
G'F'
\arrow[r, "\delta F'", swap]
&
G''F'.
\end{tikzcd}
\]
\item ``Modification'' axioms
\[
\begin{tikzcd}
G F
\arrow[Rightarrow, r, "\beta F"]
\arrow[Rightarrow, d, "G \alpha", swap]
&
G' F
\arrow[phantom, dl, "\Rrightarrow"{middle, rotate=225}, "\beta_\alpha"{above left}]
\arrow[Rightarrow, d, "G' \alpha", swap, ""{name=A, middle}]
\arrow[Rightarrow, d, shiftarr={xshift=10ex}, "G' \alpha'", ""{name=B, middle}]
\arrow[from=A, to=B, phantom, "\Llar"{below}, "G' s"{above}]
\\
G F'
\arrow[Rightarrow, r, "\beta F'", swap]
&
G' F'
\end{tikzcd}
\qquad
=
\qquad
\begin{tikzcd}
G F
\arrow[Rightarrow, r, "\beta F"]
\arrow[Rightarrow, d, "G \alpha'", ""{name=A, middle}]
\arrow[Rightarrow, d, shiftarr={xshift=-10ex}, "G \alpha", swap, ""{name=B, middle}]
\arrow[from=A, to=B, phantom, "\Llar"{below}, "G s"{above}]
&
G' F
\arrow[phantom, dl, "\Rrightarrow"{middle, rotate=225}, "\beta_{\alpha'}"{above left}]
\arrow[Rightarrow, d, "G' \alpha'"]
\\
G F'
\arrow[Rightarrow, r, "\beta F'", swap]
&
G' F'
\end{tikzcd}
\]
for any 3-cell $s: \alpha' \Rrightarrow \alpha$ and
\[
\begin{tikzcd}[arrows=Rightarrow]
G F
\arrow[Rightarrow, r, "\beta F", swap]
\arrow[Rightarrow, r, shiftarr={yshift=8ex}, "\beta' F", ""{name=B, middle}]
\arrow[r, bend left, phantom, "t F \Ddar"{middle}]
\arrow[Rightarrow, d, "G \alpha", swap]
&
G' F
\arrow[phantom, dl, "\Rrightarrow"{middle, rotate=225}, "\beta_\alpha"{above left}]
\arrow[Rightarrow, d, "G' \alpha", ""{name=A, middle}]
\\
G F'
\arrow[Rightarrow, r, "\beta F'", swap]
&
G' F'
\end{tikzcd}
\qquad
=
\qquad
\begin{tikzcd}[arrows=Rightarrow]
G F
\arrow[Rightarrow, r, "\beta' F"]
\arrow[Rightarrow, d, "G \alpha", swap]
&
G' F
\arrow[phantom, dl, "\Rrightarrow"{middle, rotate=225}, "{\beta'}_\alpha"{above left}]
\arrow[Rightarrow, d, "G' \alpha", ""{name=A, middle}]
\\
G F'
\arrow[Rightarrow, r, "\beta' F'"]
\arrow[Rightarrow, r, shiftarr={yshift=-8ex}, "\beta F", swap, ""{name=B, middle}]
\arrow[r, bend right, phantom, "t F' \Ddar"{middle}]
&
G' F'
\end{tikzcd}
\]
for any 3-cell $t: \beta' \Rrightarrow \beta$.
\end{enumerate}
Given any triple
\[
\begin{tikzcd}
A
\arrow[r, bend left, "F"]
\arrow[r, bend right, "F'", swap]
\arrow[phantom, r, "\alpha \dDar"{middle}]
&
B
\end{tikzcd}
\qquad
\begin{tikzcd}
B
\arrow[r, shift left, bend left, "G"]
\arrow[r, bend right, "G'", swap]
\arrow[phantom, r, "\beta \dDar"{middle}]
&
C
\end{tikzcd}
\qquad
\begin{tikzcd}
C
\arrow[r, shift left, bend left, "H"]
\arrow[r, bend right, "H'", swap]
\arrow[phantom, r, "\gamma \dDar"{middle}]
&
D
\end{tikzcd}
\]
the associativity equalities
\[
\gamma_{(\beta F)} = (\gamma_\beta) F, \quad 
\gamma_{(G 
\alpha)} = {(\gamma G)}_\alpha, \quad H(\beta_\alpha) 
= 
{(H\beta)}_\alpha
\]
hold, allowing us to relax the notation when working  
with the invertible 3-cells $\beta_\alpha$. 
Finally, the unit equalities
\[
1 F = F, \quad G 1 = G
\]
hold, where $1$ can stand for the identity $1$-cell, 
$2$-cell or $3$-cell on $B$.

\subsection*{Duality}

B\'{e}nabou's theorem admits variations based on duality, both in the case of ordinary categories and in the setting of $\Gray$-categories. We introduce two dual constructions on a $\Gray$-category $\gkat{K}$.

\begin{itemize}
\item The \emph{horizontal dual} $\gkat{K}^\op$ of $\gkat{K}$ is defined by reversing the 1-cells of $\gkat{K}$. That is,
\[
\gkat{K}^\op(A,B) = \gkat{K}(B,A).
\]
Composition in $\gkat{K}^\op$ is defined by the symmetry of the $\Gray$-tensor product, as is usual in the context of enriched categories.

\item The \emph{vertical dual} $\gkat{K}^\co$ of $\gkat{K}$ is defined by reversing the 2-cells of $\gkat{K}$. That is, we put
\[
\gkat{K}^\co(A,B) = (\gkat{K}(A,B))^\op;
\]
observe that the 1-cells of $\gkat{K}$ are not reversed. In this definition we use that $(\gkat{K}(A,B))$ is a \emph{2-category} and that we can therefore form \emph{its} opposite.
\end{itemize}

\section{Pseudoadjunctions and pseudoextensions}
\label{sec:pseudo-notions-in-gray-cats}

We will recall the notion of a pseudoadjunction and a pseudolifting \emph{in} a general $\Gray$-category.

\begin{definition}[Pseudoadjunctions in 
$\Gray$-categories]
\label{def:psadj-in-gray}
Let $\gkat{K}$ be a $\Gray$-category.
We say that 1-cells $U: A \to X$, $F: X \to A$ together with the data
\[
\begin{tikzcd}
X
\arrow[r, "F"]
\arrow[dr, "1", swap, ""{name=beg, middle}]
\arrow[dr, bend left, phantom, "\eta \uRar"{middle}]
&
|[alias=A]|A
\arrow[d, "U", ""{name=fin, middle}]
\\
&
X
\end{tikzcd}
\quad
\begin{tikzcd}
A
\arrow[d, "U", swap]
\arrow[dr, "1_A", ""{name=fin, middle}]
\arrow[dr, bend right, phantom, "\eps \uRar"{middle}]
&
\\
|[alias=X]|X
\arrow[r, "F", swap]
&
A
\end{tikzcd}
\quad
\begin{tikzcd}
F
\arrow[Rightarrow, r, "F \eta"]
\arrow[Rightarrow, dr, "1_F", swap, ""{name=fin, 
middle}]
\arrow[dr, bend left, phantom, "s \Urar"{middle}]
&
|[alias=FUF]|FUF
\arrow[Rightarrow, d, "\eps F"] 
\\
&
F
\end{tikzcd}
\quad
\begin{tikzcd}
U
\arrow[Rightarrow, r, "\eta U"]
\arrow[Rightarrow, dr, "1_U", swap, ""{name=fin, 
middle}]
\arrow[dr, bend left, phantom, "t \Ldar"{middle}]
&
|[alias=UFU]|UFU
\arrow[Rightarrow, d, "U \eps"] 
\\
&
U
\end{tikzcd}
\]
(with $s$ and $t$ being isomorphisms) constitute a 
\emph{pseudoadjunction} in $\gkat{K}$ with 
\emph{unit} 
$\eta$ 
and \emph{counit} $\eps$ if these data satisfy two 
coherence 
identities: the 3-cell
\[
\begin{tikzcd}
&
UF
\arrow[Rightarrow, dr, "UF \eta"]
\arrow[Rightarrow, drr, bend left, "1_{UF}", 
""{name=upp, 
middle}]
\arrow[phantom, dd, "\Rrightarrow"{middle, 
rotate=-90}, "{\eta_\eta}"{above left}]
& & \\
1_X
\arrow[Rightarrow, ur, "\eta"]
\arrow[Rightarrow, dr, "\eta", swap]
& &
UFUF
\arrow[Rightarrow, r, "U \eps F"]
\arrow[phantom, "\Rrightarrow"{middle, 
sloped, rotate=180}, "U s"{above left}, 
from=upp]
& UF \\
&
UF
\arrow[Rightarrow, ur, "\eta UF"]
\arrow[Rightarrow, urr, bend right, "1_{UF}", swap, 
""{name=downn, 
middle}]
\arrow[phantom, urr, "\Rrightarrow"{middle, 
sloped, rotate=-90}, "t F"{above left}]
& & 
\end{tikzcd}
\]
has to be equal to the identity 3-cell on $\eta$, and 
the 3-cell
\[
\begin{tikzcd}
& &
FU
\arrow[Rightarrow, rd, "\eps"]
\arrow[
phantom,
dd,
"\Rrightarrow"{sloped, middle},
"{\eps_\eps}"{above left}
]
& \\
FU
\arrow[Rightarrow, urr, bend left, "1_{FU}", 
""{name=upp, 
middle}]
\arrow[Rightarrow, r, "F \eta U"]
\arrow[Rightarrow, drr, bend right, "1_{FU}", swap, 
""{name=downn, 
middle}]
&
FUFU
\arrow[Rightarrow, ur, "\eps FU"]
\arrow[Rightarrow, dr, "FU \eps", swap]
\arrow[
phantom,
"\Rrightarrow"{sloped, middle},
"s U"{above left},
from=upp
]
\arrow[
phantom,
"\Rrightarrow"{sloped, middle, rotate=180},
"F t"{above left},
to=downn
]
& & 1_A \\
& &
FU
\arrow[Rightarrow, ur, "\eps", swap]
&
\end{tikzcd}
\]
has to be equal to the identity 3-cell on $\eps$. We write $F \adj U: A \to X$ for this pseudoadjunction.
\end{definition}

Duality operations with the $\Gray$-category $\gkat{K}$ transform pseudoadjunctions into pseudoadjunctions. The roles of the defining data have to be swapped accordingly.

\begin{remark}
Suppose we are given the category $\gkat{K}$ as in 
Definition~\ref{def:psadj-in-gray} and the data for 
the pseudoadjunction $F \adj U: A \to X$.
\begin{enumerate}
\item In $\gkat{K}^\op$, the same data transform into 
a pseudoadjunction $U \adj F: X \to A$ due to the 
reversal of 1-cells. The unit $\eta$ and counit 
$\eps$ 
stay the same, as well as the coherence 3-cells $s$ 
and $t$.
\item In $\gkat{K}^\co$, the same data transform into 
a pseudoadjunction $U \adj F: X \to A$, but with unit 
$\eps$ and counit $\eta$; the coherence 3-cells $s$ 
and $t$ stay the same, although their role as 
witnesses for the triangle isomorphisms is swapped.
\end{enumerate}
\end{remark}

The notion of a (left) pseudoextension is the appropriate weakening of the usual notion of a (left) Kan extension.

\begin{definition}[Left pseudoextension~\cite{fiore:pseudo-limits,nunes-pseudo-kan}]
\label{def:left-pseudoextension}
In a $\Gray$-category $\gkat{K}$, we say that
\[
\begin{tikzcd}
X
\arrow[r, "J"]
\arrow[dr, "H", swap, ""{name=beg, middle}]
\arrow[dr, phantom, bend left, "\eta \uRar"{middle}]
&
|[alias=A]|A
\arrow[d, "L", ""{name=fin, middle}]
\\
&
B
\end{tikzcd}
\]
exhibits $L$ as a left pseudoextension of $H$ along 
$J$ if for each
\[
\begin{tikzcd}
X
\arrow[r, "J"]
\arrow[dr, "H", swap, ""{name=beg, middle}]
\arrow[dr, phantom, bend left, "f \uRar"{middle}]
&
|[alias=A]|A
\arrow[d, "K", ""{name=fin, middle}]
\\
&
B
\end{tikzcd}
\]
(i.e., $f: H \Rightarrow KJ$) there is a 2-cell 
$f^\sharp: L 
\Rightarrow K$ and an isomorphism 3-cell
\[
\begin{tikzcd}
H
\arrow[Rightarrow, r, "\eta"]
\arrow[Rightarrow, dr, "f", swap, ""{name=fin, middle}]
\arrow[dr, phantom, bend left, "\Ldar \mu(f)"{middle}]
&
|[alias=LJ]|LJ
\arrow[Rightarrow, d, "f^\sharp J"] 
\\
&
KJ
\end{tikzcd}
\]
such that for each $k: L \Rightarrow K$ and a 3-cell
\[
\begin{tikzcd}
H
\arrow[Rightarrow, r, "\eta"]
\arrow[Rightarrow, dr, "f", swap, ""{name=fin, 
middle}]
\arrow[dr, phantom, bend left, "\Ldar \omega"{middle}]
&
|[alias=LJ]|LJ
\arrow[Rightarrow, d, "kJ"] 
\\
&
KJ
\end{tikzcd}
\]
there is a unique 3-cell $\lift{\omega}: k 
\Rrightarrow f^\sharp$ such that
\[
\begin{tikzcd}
H
\arrow[Rightarrow, r, "\eta"]
\arrow[Rightarrow, dr, "f", swap, ""{name=fin1, 
middle}]
&
|[alias=LJ]|LJ
\arrow[Rightarrow, d, "f^\sharp J", swap, ""{name=fin2, middle}]
\\
&
|[alias=KJ]|KJ
\arrow[phantom, from=LJ, to=fin1, "\Rrightarrow"{near 
end, rotate=225}, 
"\mu(f)"{near 
start}]
\arrow[Rightarrow, from=LJ, to=KJ, 
shiftarr={xshift=10ex}, 
"kJ", ""{name=beg2, middle}]
\arrow[from=beg2, to=fin2, "\Rrightarrow"{below, 
rotate=180}, 
"\lift{\omega} J"{below}, phantom]
\end{tikzcd}
=
\begin{tikzcd}
H
\arrow[Rightarrow, r, "\eta"]
\arrow[Rightarrow, dr, "f", swap, ""{name=fin, middle}]
\arrow[dr, phantom, bend left, "\Ldar \omega"{middle}]
&
|[alias=LJ]|LJ
\arrow[Rightarrow, d, "kJ"]
\\
&
KJ
\end{tikzcd}
\]

We say that the pseudoextension $\eta: H \to LJ$ is 
\emph{preserved by $G: B \to C$} if the 2-cell
\[
\begin{tikzcd}
X
\arrow[r, "J"]
\arrow[dr, "GH", swap, ""{name=beg, middle}]
\arrow[dr, phantom, bend left, "G \eta \uRar"{middle}]
&
|[alias=A]|A
\arrow[d, "GL", ""{name=fin, middle}]
\\
&
C
\end{tikzcd}
\]
exhibits $GL$ as a left pseudoextension of $GH$ along 
$J$.

The pseudoextension $\eta: H \to LJ$ is said to be 
\emph{absolute} if it is preserved by \emph{any} 
1-cell $G: B \to C$.
\end{definition}

The various notions of duality for $\Gray$-categories allow us to express compactly the definition of (left/right) pseudoextensions and pseudoliftings via the definition of a left pseudoextension.

\begin{definition}
Given a left pseudoextension 
\[
\begin{tikzcd}
X
\arrow[r, "J"]
\arrow[dr, "H", swap, ""{name=beg, middle}]
\arrow[dr, phantom, bend left, "\eta \uRar"{middle}]
&
|[alias=A]|A
\arrow[d, "L", ""{name=fin, middle}]
\\
&
B
\end{tikzcd}
\]
in a $\Gray$-category $\gkat{K}$, we call it
\begin{enumerate}
\item a \emph{right} pseudoextension of $H$ along $J$ 
in $\gkat{K}^\co$.
\item a \emph{left} pseudolifting of $H$ through $J$ 
in $\gkat{K}^\op$.
\item a \emph{right} pseudolifting of $H$ through $J$ 
in $\gkat{K}^{\co \op}$.
\end{enumerate}
\end{definition}

With these definitions at hand, we can move to the statement and proof of B\'{e}nabou's theorem.

\section{B\'{e}nabou's theorem for pseudoextensions}
\label{sec:benabou}

Let us first recall ordinary B\'{e}nabou's theorem~\cite{benabou}.

\begin{theorem}
For functors $U: \A \to \X$ and $F: \X \to \A$ the following are equivalent:
\begin{enumerate}
\item $F \adj U$ holds with unit $\eta$.
\item $\eta$ exhibits $U$ as an absolute left extension of $1_\X$ along $F$.
\item $\eta$ exhibits $U$ as a left extension of $1_\X$ along $F$, and this extension is preserved by $F$.
\item $\eta$ exhibits $F$ as an absolute left lifting of $1_\X$ through $U$.
\item $\eta$ exhibits $F$ as a left lifting of $1_\X$ through $U$, and this lifting is preserved by $U$.
\end{enumerate}
\end{theorem}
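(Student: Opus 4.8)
The plan is to establish the cycle $(1) \Rightarrow (2) \Rightarrow (3) \Rightarrow (1)$ and then to deduce the equivalences with $(4)$ and $(5)$ by a single duality argument, so that the "lifting" conditions require no separate computation.

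For $(1) \Rightarrow (2)$, I would assume $F \adj U$ with unit $\eta$ and counit $\eps$, subject to the triangle identities $(\eps F) \comp (F\eta) = 1_F$ and $(U\eps) \comp (\eta U) = 1_U$. To prove the extension is \emph{absolute}, I fix an arbitrary functor $G \colon \X \to \Z$ and show that $G\eta$ exhibits $GU$ as a left extension of $G$ along $F$. Given any $K \colon \A \to \Z$ and any $\gamma \colon G \Rightarrow KF$, I define the comparison $\gamma^{\sharp} := (K\eps) \comp (\gamma U) \colon GU \Rightarrow K$. Using the interchange law (naturality of $\gamma$ against $\eta$) together with the identity $(\eps F)\comp(F\eta) = 1_F$, I verify $(\gamma^{\sharp} F) \comp (G\eta) = \gamma$; uniqueness of $\gamma^{\sharp}$ follows by whiskering any competitor with $U$, applying interchange against $\eps$, and invoking $(U\eps)\comp(\eta U) = 1_U$. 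Since $G$ is arbitrary, absoluteness is automatic. The implication $(2) \Rightarrow (3)$ is then immediate, an absolute extension being in particular preserved by $F$.

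The crux is $(3) \Rightarrow (1)$. Here I would read the hypothesis that the extension is preserved by $F$ as the statement that $F\eta \colon F \Rightarrow FUF$ exhibits $FU$ as a left extension of $F$ along $F$. Feeding the identity $1_F \colon F \Rightarrow 1_\A F$ into this universal property produces a \emph{unique} $\eps \colon FU \Rightarrow 1_\A$ with $(\eps F) \comp (F\eta) = 1_F$; this simultaneously defines the counit and delivers the first triangle identity for free. For the second triangle identity I return to the extension property of $U$ itself: taking $K = U$ and the competitor $\eta$, the unique $\beta \colon U \Rightarrow U$ with $(\beta F) \comp \eta = \eta$ is manifestly $1_U$. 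I then show that $(U\eps) \comp (\eta U)$ also satisfies this equation, using interchange (naturality of $\eta$ against $\eta$) and the already-established identity $(\eps F)\comp(F\eta) = 1_F$, whence uniqueness forces $(U\eps) \comp (\eta U) = 1_U$. This yields both triangle identities, i.e.\ the adjunction $F \adj U$.

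Finally, for $(4)$ and $(5)$ I would invoke the duality remark preceding the theorem. In the horizontal dual $\Cat^{\op}$ the same data present an adjunction with the roles of $F$ and $U$ interchanged, and a left extension in $\Cat^{\op}$ is precisely a left lifting in $\Cat$; so applying the already-proven equivalence $(1) \Leftrightarrow (2) \Leftrightarrow (3)$ inside $\Cat^{\op}$ produces exactly the statements $(4)$ and $(5)$. The main obstacle is concentrated in $(3) \Rightarrow (1)$: the two delicate points are recognising that "preserved by $F$" is exactly the hypothesis needed to obtain $\eps$ as the comparison associated with $1_F$, and that the second triangle identity must be extracted from the \emph{uniqueness} clause of the extension property of $U$, rather than constructed by hand.
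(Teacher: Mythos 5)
Your proposal is correct and takes essentially the same route as the paper: prove $(1)\Rightarrow(2)$ by setting $\gamma^{\sharp}=(K\eps)\comp(\gamma U)$, note $(2)\Rightarrow(3)$ is trivial, obtain $(3)\Rightarrow(1)$ by extracting $\eps$ from the extension preserved by $F$ applied to $1_F$ and forcing the second triangle identity from the uniqueness clause with $\eta^{\sharp}=1_U$, and settle $(4)$ and $(5)$ by duality. This is exactly the decomposition the paper announces for the ordinary theorem and then elaborates in its two lemmas for the pseudo-case.
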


The pseudo-version of B\'{e}nabou's theorem can be stated in the same way, changing the notions of adjunction and extension/lifting to the notions of pseudoadjunction and pseudoextension/pseudolifting.

\begin{theorem}
\label{thm:pseudo-benabou}
Given two 1-cells $U: A \to X$ and $F: X \to A$ in a $\Gray$-category $\gkat{K}$, the following are equivalent:
\begin{enumerate}
\item $F \adj U$ is a pseudoadjunction with unit $\eta$.
\item $\eta$ exhibits $U$ as an absolute left pseudoextension of $1_X$ along $F$.
\item $\eta$ exhibits $U$ as a left pseudoextension of $1_X$ along $F$, and this extension is preserved by $F$.
\item $\eta$ exhibits $F$ as an absolute left pseudolifting of $1_X$ through $U$.
\item $\eta$ exhibits $F$ as a left pseudolifting of $1_X$ through $U$, and this lifting is preserved by $U$.
\end{enumerate}
\end{theorem}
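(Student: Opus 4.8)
The plan is to prove the cycle $(1) \Rightarrow (2) \Rightarrow (3) \Rightarrow (1)$ and then to obtain $(1) \Leftrightarrow (4) \Leftrightarrow (5)$ for free by horizontal duality. The implication $(2) \Rightarrow (3)$ is immediate, since an absolute pseudoextension is by definition preserved by every 1-cell, in particular by $F$ itself. For the duality step, recall that in $\gkat{K}^\op$ the pseudoadjunction $F \adj U$ becomes $U \adj F$ with the same unit $\eta$, and that by the dictionary relating pseudoliftings and pseudoextensions (stated after Definition~\ref{def:left-pseudoextension}) a left pseudoextension in $\gkat{K}^\op$ is precisely a left pseudolifting in $\gkat{K}$, absoluteness being insensitive to the duality because every 1-cell of $\gkat{K}^\op$ is a 1-cell of $\gkat{K}$. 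Applying the already-established equivalence $(1)\Leftrightarrow(2)\Leftrightarrow(3)$ inside $\gkat{K}^\op$ to $U \adj F$ then reads off, back in $\gkat{K}$, exactly the statements $(4)$ and $(5)$. Thus it suffices to treat $(1)\Leftrightarrow(2)\Leftrightarrow(3)$.

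For $(1) \Rightarrow (2)$ I would produce, from the pseudoadjunction data $(\eta, \eps, s, t)$, the universal factorisations exhibiting $G\eta$ as a left pseudoextension of $G$ along $F$ for an arbitrary $G \colon X \to C$; absoluteness then falls out automatically because the construction never constrains $G$. Given a 2-cell $f \colon G \Rightarrow KF$ with $K \colon A \to C$, I set $f^\sharp = (K\eps) \comp (fU) \colon GU \Rightarrow K$, the transpose across the counit. The comparison isomorphism $\mu(f) \colon (f^\sharp F) \comp (G\eta) \cong f$ is assembled from three pieces: the $\Gray$-interchange isomorphism $f_\eta$ relating $(fUF)\comp(G\eta)$ with $(KF\eta)\comp f$, the 2-functoriality of $K(\blank)$ collapsing $(K\eps F)\comp(KF\eta)$ into $K(\eps F \comp F\eta)$, and the whiskered triangle isomorphism $Ks \colon K(\eps F \comp F\eta) \cong 1_{KF}$. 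The remaining content is the bijection on 3-cells: given $k \colon GU \Rightarrow K$ together with a 3-cell $\omega \colon (kF)\comp(G\eta) \Rrightarrow f$, one must exhibit a unique $\lift{\omega}\colon k \Rrightarrow f^\sharp$ inducing $\omega$ through $\mu(f)$. This is where the second triangle isomorphism $t$ and the two coherence identities of Definition~\ref{def:psadj-in-gray} are used: they supply a two-sided inverse, at the level of 3-cells, to the transpose $k \mapsto (kF)\comp(G\eta)$.

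The heart of the argument is $(3) \Rightarrow (1)$. Preservation by $F$ says that $F\eta \colon F \Rightarrow FUF$ exhibits $FU$ as a left pseudoextension of $F$ along $F$. Feeding the 2-cell $1_F \colon F \Rightarrow (1_A)F$ into this universal property produces the counit $\eps := (1_F)^\sharp \colon FU \Rightarrow 1_A$ together with the first triangle isomorphism $s := \mu(1_F) \colon (\eps F)\comp(F\eta) \cong 1_F$. To obtain the second triangle isomorphism $t$ I use the full universal property of $U$ as a pseudoextension of $1_X$ along $F$, reformulated as the assertion that the assignment $k \mapsto (kF)\comp\eta$ from the hom-category $\gkat{K}(A,X)(U,K)$ to $\gkat{K}(X,X)(1_X, KF)$ is an equivalence (essential surjectivity being the existence of $f^\sharp$ and $\mu(f)$, full faithfulness being the 3-cell clause). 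Taking $K = U$, the 2-cell $1_U$ maps to $\eta$ while $(U\eps)\comp(\eta U)$ maps to $(U\eps F)\comp(\eta UF)\comp\eta$; an invertible 3-cell between these two images, assembled from the interchange cell $\eta_\eta$, the 2-functoriality of $U(\blank)$, and the whiskered triangle isomorphism $Us$, lifts uniquely along full faithfulness to the desired $t \colon (U\eps)\comp(\eta U) \cong 1_U$.

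It then remains to verify that $(\eta, \eps, s, t)$ satisfy the two coherence identities of Definition~\ref{def:psadj-in-gray}. Each is an equation of invertible 3-cells whose two sides become equal after transposing along the pseudoextension $\eta$; full faithfulness of the transpose then forces the identity itself. I expect this bookkeeping to be the main obstacle: tracking the $\Gray$-interchange isomorphisms $\beta_\alpha$ through the transposes and confirming that the transported equations are genuine consequences of $s$, $t$, $\eta_\eta$ and $\eps_\eps$ requires repeated, careful appeals to the naturality of the comparison cells and to the cubical-functor (composition and modification) axioms, and the coherence cells proliferate quickly. Once both identities are checked the cycle closes, and together with the duality argument of the first paragraph all five statements are seen to be equivalent.
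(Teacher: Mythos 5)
Your proposal follows the paper's proof essentially step for step: the same reduction (with (2)$\Rightarrow$(3) trivial and (4), (5) obtained by duality in $\gkat{K}^\op$, leaving only (1)$\Rightarrow$(2) and (3)$\Rightarrow$(1)), the same transpose $f^\sharp = (K\eps)\comp(fU)$ with $\mu(f)$ pasted from the interchange cell $f_\eta$ and the whiskered triangle isomorphism $Ks$, the same construction $\eps = (1_F)^\sharp$ and $s = \mu(1_F)$ from preservation by $F$, and the same lift of the $3$-cell built from $\eta_\eta$ and $Us$ to obtain $t$, with the coherence identities forced by uniqueness of lifts exactly as in the paper (where the explicit $\lift{\omega}$ is pasted from $\omega U$, $k_\eps^{-1}$ and $Gt^{-1}$). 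The one imprecision is that the second coherence identity must be checked by faithfulness of the transpose along the \emph{preserved} extension $F\eta$ (the relevant $3$-cells into $\eps$ live in $\gkat{K}(A,A)$, so the universal property of $\eta$ itself does not apply there), but since you already invoke that universal property to build $\eps$ and $s$, this is a slip of wording rather than of method.
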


The proof strategy in the ordinary case and in the pseudo-case is the same: it is enough to prove the implications (1) $\implies$ (2) and (3) $\implies$ (1). This is because (2) $\implies$ (3) is trivial, and because the equivalence of (1), (4) and (5) follows by duality. Moreover, the ordinary proofs can serve as a guidance for the proofs of the pseudo-case.

\begin{lemma}[The implication (3) $\implies$ (1)]
Suppose that
\[
\begin{tikzcd}
X
\arrow[r, "F"]
\arrow[dr, "1", swap, ""{name=beg, middle}]
\arrow[dr, phantom, bend left, "\eta \uRar"{middle}]
&
|[alias=A]|A
\arrow[d, "U", ""{name=fin, middle}]
\\
&
X
\end{tikzcd}
\]
is a left (Kan) pseudoextension preserved by $F$. 
Then 
$\eta$ can be made a unit of a pseudoadjunction $F 
\adj U$.
\end{lemma}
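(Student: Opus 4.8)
The plan is to regard the universal property of a left pseudoextension as an equivalence of the relevant hom-categories and to extract the counit and the two triangle isomorphisms from it. Write $\Phi_K\colon \gkat{K}(A,X)(U,K)\to\gkat{K}(X,X)(1_X,KF)$ for the functor sending a $2$-cell $k$ to $(kF)\cdot\eta$ and a $3$-cell $\sigma$ to $(\sigma F)\ast\eta$. Unpacking Definition~\ref{def:left-pseudoextension} shows, routinely, that the hypothesis ``$\eta$ exhibits $U$ as a left pseudoextension of $1_X$ along $F$'' is equivalent to each $\Phi_K$ being an equivalence of categories: essential surjectivity is the existence of $f^\sharp$ with the isomorphism $\mu(f)$, and full faithfulness is the existence and uniqueness of $\lift{\omega}$. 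Likewise, preservation by $F$ says that $\Psi_K\colon \gkat{K}(A,A)(FU,K)\to\gkat{K}(X,A)(F,KF)$, $k\mapsto (kF)\cdot(F\eta)$, is an equivalence for every $K$. I will use $\Psi$ to manufacture $\eps$ together with one triangle isomorphism, $\Phi$ to manufacture the other, and the faithfulness of these functors to discharge the coherence identities.

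First I would produce the counit. Applying essential surjectivity of $\Psi_{1_A}$ to the identity $2$-cell $1_F\colon F\Rightarrow 1_A F$ gives a $2$-cell $\eps:=(1_F)^\sharp\colon FU\Rightarrow 1_A$ together with an invertible $3$-cell $s:=\mu(1_F)\colon (\eps F)\cdot(F\eta)\Rrightarrow 1_F$; this $s$ is already the first triangle isomorphism of the sought pseudoadjunction, so it requires no further checking. Next I would build the second triangle isomorphism $t\colon (U\eps)\cdot(\eta U)\Rrightarrow 1_U$. Both $1_U$ and $(U\eps)\cdot(\eta U)$ are objects of $\gkat{K}(A,X)(U,U)$, with $\Phi_U(1_U)=\eta$ and $\Phi_U\big((U\eps)(\eta U)\big)=(U\eps F)\cdot(\eta UF)\cdot\eta$. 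Using the invertible cubical interchange $\eta_\eta\colon (UF\eta)\cdot\eta\Rrightarrow(\eta UF)\cdot\eta$, the $2$-functoriality identity $(U\eps F)\cdot(UF\eta)=U\big((\eps F)\cdot(F\eta)\big)$, and the whiskered isomorphism $Us\colon U\big((\eps F)(F\eta)\big)\Rrightarrow 1_{UF}$, I can assemble an invertible $3$-cell $\theta:=(Us\ast\eta)\comp\big((U\eps F)\ast\eta_\eta^{-1}\big)\colon (U\eps F)(\eta UF)\eta\Rrightarrow\eta$. Since $\Phi_U$ is fully faithful there is then a unique $t$ with $\Phi_U(t)=(tF)\ast\eta=\theta$, and $t$ is invertible because $\theta$ is.

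It remains to verify the two swallowtail coherence identities of Definition~\ref{def:psadj-in-gray}. The identity on $\eta$ holds essentially by construction: reading off its pasting, it amounts exactly to $\big((tF)\ast\eta\big)\comp\big((U\eps F)\ast\eta_\eta\big)=Us\ast\eta$, and substituting $\Phi_U(t)=\theta$ makes the two copies of $\eta_\eta$ cancel, collapsing the left-hand side to $Us\ast\eta$. The identity on $\eps$ --- built from $\eps_\eps$, $sU$ and $Ft$ --- is the genuine obstacle, since none of $\eps$, $s$, $t$ was defined with this identity in mind. I would discharge it through faithfulness of $\Psi_{1_A}$: both sides are endo-$3$-cells of $\eps$, so it suffices to show that applying $\Psi_{1_A}$, i.e.\ whiskering by $F$ and pasting with $F\eta$, turns the pasting into $1_{(\eps F)(F\eta)}=\Psi_{1_A}(1_\eps)$. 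Carrying out this reduction is the laborious step: one must rewrite $\Psi_{1_A}$ of the pasting using the associativity relations $\gamma_{(\beta F)}=(\gamma_\beta)F$ and $H(\beta_\alpha)=(H\beta)_\alpha$ for the interchange isomorphisms, the defining equation of $s$, and the equation $\Phi_U(t)=\theta$, until everything telescopes to the identity. I expect tracking the interaction of $\eps_\eps$ with the cubical interchange under whiskering to be the only genuinely delicate computation; the invertibility of $s$ and $t$ is what guarantees the cancellations go through, and the remaining manipulations are formal.
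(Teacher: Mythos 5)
Your proposal is correct and takes essentially the same route as the paper: extracting $\eps$ and $s$ by applying the $F$-preserved pseudoextension to $1_F$, obtaining $t$ as the unique lift against the extension $\eta$ (with $\eta^\sharp=1_U$, $\mu(\eta)=1_\eta$) of the $3$-cell pasted from $\eta_\eta^{-1}$ and $Us$ so that the first swallowtail holds by the defining equation of $t$, and then proving the second swallowtail by cancellation against the $F\eta$-extension, which is exactly the paper's comparison of the two parallel $3$-cells built from $Ft$ and from $s^{-1}U$ with $\eps_\eps^{-1}$ after whiskering with $F$ and invoking the first identity together with the cubical interchange axioms. One small caveat: your gloss that the definition is equivalent to each $\Phi_K$ being an equivalence of categories overstates what Definition~\ref{def:left-pseudoextension} literally gives (a right adjoint to $\Phi_K$ with invertible counit $\mu$, from which full faithfulness of $\Phi_K$ and iso-reflection do not formally follow), but every use you actually make of it --- unique lifting against the chosen lift of $\eta$, and invertibility of $t$ --- is made, equally tacitly, in the paper's own proof.
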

\begin{proof}
Recall that
\[
\begin{tikzcd}
X
\arrow[r, "F"]
\arrow[dr, "1", swap, ""{name=beg, middle}]
\arrow[dr, phantom, bend left, "\eta \uRar"{middle}]
&
|[alias=A]|A
\arrow[d, "U", ""{name=fin, middle}]
\\
&
X
\end{tikzcd}
\]
is a left pseudoextension if for each
\[
\begin{tikzcd}
X
\arrow[r, "F"]
\arrow[dr, "1", swap, ""{name=beg, middle}]
\arrow[dr, phantom, bend left, "f \uRar"{middle}]
&
|[alias=A]|A
\arrow[d, "K", ""{name=fin, middle}]
\\
&
X
\end{tikzcd}
\]
(i.e., $f: 1_X \Rightarrow KF$) there is a 2-cell 
$f^\sharp: U 
\Rightarrow K$ and an isomorphism 3-cell
\[
\begin{tikzcd}
1_X
\arrow[Rightarrow, r, "\eta"]
\arrow[Rightarrow, dr, "f", swap, ""{name=fin, 
middle}]
\arrow[dr, phantom, bend left, "\Ldar \mu(f)"{middle}]
&
|[alias=UF]|UF
\arrow[Rightarrow, d, "f^\sharp F"] 
\\
&
KF
\end{tikzcd}
\]
such that for each $k: U \Rightarrow K$ and a 3-cell
\[
\begin{tikzcd}
1_X
\arrow[Rightarrow, r, "\eta"]
\arrow[Rightarrow, dr, "f", swap, ""{name=fin, 
middle}]
\arrow[dr, phantom, bend left, "\Ldar \omega"{middle}]
&
|[alias=UF]|UF
\arrow[Rightarrow, d, "kF"] 
\\
&
KF
\end{tikzcd}
\]
there is a unique 3-cell $\lift{\omega}: k 
\Rrightarrow f^\sharp$ such that
\[
\begin{tikzcd}
1_X
\arrow[Rightarrow, r, "\eta"]
\arrow[Rightarrow, dr, "f", swap, ""{name=fin1, 
middle}]
&
|[alias=UF]|UF
\arrow[Rightarrow, d, "f^\sharp F", swap, 
""{name=fin2, middle}] 
\\
&
|[alias=KF]|KF
\arrow[phantom, from=UF, to=fin1, "\Rrightarrow"{near 
end, rotate=225}, 
"\mu(f)"{near 
start}]
\arrow[Rightarrow, from=UF, to=KF, 
shiftarr={xshift=10ex}, 
"kF", ""{name=beg2, middle}]
\arrow[from=beg2, to=fin2, "\Rrightarrow"{below, 
rotate=180}, 
"\lift{\omega} F"{below}, phantom]
\end{tikzcd}
=
\begin{tikzcd}
1_X
\arrow[Rightarrow, r, "\eta"]
\arrow[Rightarrow, dr, "f", swap, ""{name=fin, 
middle}]
\arrow[dr, phantom, bend left, "\Ldar \omega"{middle}]
&
|[alias=UF]|UF
\arrow[Rightarrow, d, "kF"] 
\\
&
KF
\end{tikzcd}
\]
For the purpose of establishing notation, we describe 
the data concerning the left pseudoextension
\[
\begin{tikzcd}
X
\arrow[r, "F"]
\arrow[dr, "F", swap, ""{name=beg, middle}]
\arrow[dr, phantom, bend left, "F \eta \uRar"{middle}]
&
|[alias=A]|A
\arrow[d, "FU", ""{name=fin, middle}]
\\
&
A
\end{tikzcd}
\]
Given, e.g., the identity
\[
\begin{tikzcd}
X
\arrow[r, "F"]
\arrow[dr, "F", swap, ""{name=beg, middle}]
\arrow[dr, phantom, bend left, "1_F \uRar"{middle}]
&
|[alias=A]|A
\arrow[d, "1_A", ""{name=fin, middle}]
\\
&
A
\end{tikzcd}
\]
(the 2-cell $1_F: F \Rightarrow F$), we have a 2-cell
$(1_A)^\sharp: FU \Rightarrow 1_A$ that we will 
denote 
by $\eps$ and which will be the counit of the 
pseudoadjunction we construct. With this counit comes 
an isomorphism
\[
\begin{tikzcd}
F
\arrow[Rightarrow, r, "F \eta"]
\arrow[Rightarrow, dr, "1_F", swap, ""{name=fin, 
middle}]
\arrow[dr, phantom, bend left, "\Ldar s^{-1}"{middle}]
&
|[alias=FUF]|FUF
\arrow[Rightarrow, d, "\eps F"] 
\\
&
F
\end{tikzcd}
\]
such that for each $h: FU \Rightarrow 1_A$ and a 
3-cell
\[
\begin{tikzcd}
F
\arrow[Rightarrow, r, "F \eta"]
\arrow[Rightarrow, dr, "1_F", swap, ""{name=fin, 
middle}]
\arrow[dr, phantom, bend left, "\Ldar \nu"{middle}]
&
|[alias=FUF]|FUF
\arrow[Rightarrow, d, "h F"] 
\\
&
F
\end{tikzcd}
\]
there is a unique $\lift{\nu}: h \Rrightarrow \eps$ 
satisfying
\[
\begin{tikzcd}
F
\arrow[Rightarrow, r, "F \eta"]
\arrow[Rightarrow, dr, "1_F", swap, ""{name=fin1, 
middle}]
&
|[alias=FUF]|FUF
\arrow[Rightarrow, d, "\eps F", swap, 
""{name=fin2, middle}] 
\\
&
|[alias=F]|F
\arrow[phantom, from=FUF, to=fin1, 
"\Rrightarrow"{near 
end, rotate=225}, 
"s^{-1}"{near 
start}]
\arrow[Rightarrow, from=FUF, to=F, 
shiftarr={xshift=10ex}, 
"kF", ""{name=beg2, middle}]
\arrow[from=beg2, to=fin2, "\Rrightarrow"{below, 
rotate=180}, 
"\lift{\nu} F"{below}, phantom]
\end{tikzcd}
=
\begin{tikzcd}
F
\arrow[Rightarrow, r, "F \eta"]
\arrow[Rightarrow, dr, "1_F", swap, ""{name=fin, middle}]
\arrow[dr, phantom, bend left, "\Ldar \nu"{middle}]
&
|[alias=FUF]|UF
\arrow[Rightarrow, d, "h F"] 
\\
&
F
\end{tikzcd}
\]
Let us first observe that $s^{-1}$ (or, equivalently, 
its inverse $s$) witnesses the first 
triangle axiom of a pseudomonad. To obtain the second 
triangle isomorphism, consider that $\eta: 1_X 
\Rightarrow UF$ lifts to the identity $1_U = 
\eta^\sharp: U \Rightarrow U$ with the identity 
3-cell 
($\mu(\eta) = 1_\eta$)
\[
\begin{tikzcd}
1_X
\arrow[Rightarrow, r, "\eta"]
\arrow[Rightarrow, dr, "\eta", swap, ""{name=fin, 
middle}]
\arrow[dr, phantom, bend left, "\Ldar \mu(\eta)"{middle}]
&
|[alias=UF]|UF
\arrow[Rightarrow, d, "1_{UF}"] 
\\
&
UF
\end{tikzcd}
\]
By the universal property of the left pseudoextension 
given by $\eta$ we get that for the 2-cell $$U 
\xRightarrow{\eta U} UFU \xRightarrow{U \eps} U$$ and 
the 3-cell
\[
\begin{tikzcd}
1_X
\arrow[Rightarrow, r, "\eta"]
\arrow[Rightarrow, d, "\eta", swap]
&
UF
\arrow[Rightarrow, d, "\eta UF"]
\arrow[dl, phantom, "\Rrightarrow"{rotate=225}, 
"{\eta_\eta}^{-1}"{near start}]
\\
UF
\arrow[Rightarrow, r, "UF \eta"]
\arrow[Rightarrow, dr, "1_{UF}", swap, ""{name=fin, 
middle}]
\arrow[dr, phantom, bend left, "\Ldar Us^{-1}"{middle}]
&
|[alias=UFUF]|UFUF
\arrow[Rightarrow, d, "U \eps F"] 
\\
&
UF
\end{tikzcd}
\]
there is a unique 3-cell
\[
\begin{tikzcd}
U
\arrow[Rightarrow, r, "\eta U"]
\arrow[Rightarrow, dr, "1_U", swap, ""{name=fin, 
middle}]
\arrow[dr, bend left, phantom, "t \Ldar"{middle}]
&
|[alias=UFU]|UFU
\arrow[Rightarrow, d, "U \eps"] 
\\
&
U
\end{tikzcd}
\]
such that the 3-cell
\[
\begin{tikzcd}
1_X
\arrow[Rightarrow, r, "\eta"]
&
UF
\arrow[Rightarrow, r, "\eta UF"]
\arrow[Rightarrow, dr, "1_{UF}", swap, ""{name=fin, 
middle}]
\arrow[dr, bend left, phantom, "tF \Ldar"{middle}]
&
|[alias=UFUF]|UFUF
\arrow[Rightarrow, d, "U \eps F"] 
\\
&&
UF
\end{tikzcd}
\]
equals
\[
\begin{tikzcd}
1_X
\arrow[Rightarrow, r, "\eta"]
\arrow[Rightarrow, d, "\eta", swap]
&
UF
\arrow[Rightarrow, d, "\eta UF"]
\arrow[dl, phantom, "\Rrightarrow"{rotate=225}, 
"{\eta_\eta}^{-1}"{near start}]
\\
UF
\arrow[Rightarrow, r, "UF \eta"]
\arrow[Rightarrow, dr, "1_{UF}", swap, ""{name=fin, 
middle}]
\arrow[dr, bend left, phantom, "Us^{-1} \Ldar"{middle}]
&
|[alias=UFUF]|UFUF
\arrow[Rightarrow, d, "U \eps F"] 
\\
&
UF
\end{tikzcd}
\]
or, written differently, that the 3-cell
\[
\begin{tikzcd}
&
UF
\arrow[Rightarrow, dr, "UF \eta"]
\arrow[Rightarrow, drr, bend left, "1_{UF}", 
""{name=upp, 
middle}]
\arrow[phantom, dd, "\Rrightarrow"{middle, 
rotate=-90}, "{\eta_\eta}"{above left}]
& & \\
1_X
\arrow[Rightarrow, ur, "\eta"]
\arrow[Rightarrow, dr, "\eta", swap]
& &
UFUF
\arrow[Rightarrow, r, "U \eps F"]
\arrow[phantom, "\Rrightarrow"{middle, 
sloped, rotate=180}, "U s"{above left}, 
from=upp]
& UF \\
&
UF
\arrow[Rightarrow, ur, "\eta UF"]
\arrow[Rightarrow, urr, bend right, "1_{UF}", swap, 
""{name=downn, 
middle}]
\arrow[phantom, urr, "\Rrightarrow"{middle, 
sloped, rotate=-90}, "t F"{above left}]
& & 
\end{tikzcd}
\]
is equal to identity. This is precisely the first 
coherence axiom for pseudoadjunctions.

For the other coherence axiom, we need the 3-cell
\[
\begin{tikzcd}
& &
FU
\arrow[Rightarrow, rd, "\eps"]
\arrow[
phantom,
dd,
"\Rrightarrow"{sloped, middle},
"{\eps_\eps}"{above left}
]
& \\
FU
\arrow[Rightarrow, urr, bend left, "1_{FU}", 
""{name=upp, 
middle}]
\arrow[Rightarrow, r, "F \eta U"]
\arrow[Rightarrow, drr, bend right, "1_{FU}", swap, 
""{name=downn, 
middle}]
&
FUFU
\arrow[Rightarrow, ur, "\eps FU"]
\arrow[Rightarrow, dr, "FU \eps", swap]
\arrow[
phantom,
"\Rrightarrow"{sloped, middle},
"s U"{above left},
from=upp
]
\arrow[
phantom,
"\Rrightarrow"{sloped, middle, rotate=180},
"F t"{above left},
to=downn
]
& & 1_A \\
& &
FU
\arrow[Rightarrow, ur, "\eps", swap]
&
\end{tikzcd}
\]
to be equal to identity as well.

We shall use that for each $h: FU \Rightarrow 1_A$ 
and 
a 3-cell
\[
\begin{tikzcd}
F
\arrow[Rightarrow, r, "F \eta"]
\arrow[Rightarrow, dr, "1_F", swap, ""{name=fin, 
middle}]
\arrow[dr, bend left, phantom, "\nu \Ldar"{middle}]
&
|[alias=FUF]|FUF
\arrow[Rightarrow, d, "h F"] 
\\
&
F
\end{tikzcd}
\]
there is a \emph{unique} $\lift{\nu}: h \Rrightarrow 
\eps$ 
satisfying
\[
\begin{tikzcd}
F
\arrow[Rightarrow, r, "F \eta"]
\arrow[Rightarrow, dr, "1_F", swap, ""{name=fin1, 
middle}]
\arrow[dr, bend left, phantom, "s^{-1} \Ldar"{middle}]
&
|[alias=FUF]|FUF
\arrow[Rightarrow, d, "\eps F", swap, 
""{name=fin2, middle}] 
\\
&
|[alias=F]|F
\arrow[Rightarrow, from=FUF, to=F, 
shiftarr={xshift=10ex}, 
"kF", ""{name=beg2, middle}]
\arrow[from=beg2, to=fin2, "\Rrightarrow"{below, 
rotate=180}, 
"\lift{\nu} F"{below}, phantom]
\end{tikzcd}
=
\begin{tikzcd}
F
\arrow[Rightarrow, r, "F \eta"]
\arrow[Rightarrow, dr, "1_F", swap, ""{name=fin, 
middle}]
\arrow[dr, bend left, phantom, "\nu \Ldar"{middle}]
&
|[alias=FUF]|UF
\arrow[Rightarrow, d, "h F"] 
\\
&
F
\end{tikzcd}
\]
Thus if we find \emph{two} 3-cells $\alpha, \beta: h 
\Rrightarrow \eps$ with
\[
\begin{tikzcd}
F
\arrow[Rightarrow, r, "F \eta"]
\arrow[Rightarrow, dr, "1_F", swap, ""{name=fin1, 
middle}]
\arrow[dr, bend left, phantom, "s^{-1} \Ldar"{middle}]
&
|[alias=FUF]|FUF
\arrow[Rightarrow, d, "\eps F", swap, 
""{name=fin2, middle}] 
\\
&
|[alias=F]|F
\arrow[Rightarrow, from=FUF, to=F, 
shiftarr={xshift=10ex}, 
"kF", ""{name=beg2, middle}]
\arrow[from=beg2, to=fin2, "\Rrightarrow"{below, 
rotate=180}, 
"\alpha F"{below}, phantom]
\end{tikzcd}
=
\begin{tikzcd}
F
\arrow[Rightarrow, r, "F \eta"]
\arrow[Rightarrow, dr, "1_F", swap, ""{name=fin1, 
middle}]
\arrow[dr, bend left, phantom, "s^{-1} \Ldar"{middle}]
&
|[alias=FUF]|FUF
\arrow[Rightarrow, d, "\eps F", swap, 
""{name=fin2, middle}] 
\\
&
|[alias=F]|F
\arrow[Rightarrow, from=FUF, to=F, 
shiftarr={xshift=10ex}, 
"kF", ""{name=beg2, middle}]
\arrow[from=beg2, to=fin2, "\Rrightarrow"{below, 
rotate=180}, 
"\beta F"{below}, phantom]
\end{tikzcd}
\]
it means that $\alpha = \beta$. Take now the 3-cell
\[
\begin{tikzcd}
FU
\arrow[Rightarrow, r, "F\eta U"]
\arrow[Rightarrow, dr, "1_{FU}", swap, ""{name=fin, 
middle}]
\arrow[dr, bend left, phantom, "Ft \Ldar"{middle}]
&
|[alias=FUFU]|FUFU
\arrow[Rightarrow, d, "FU \eps"] 
&
\\
&
FU
\arrow[Rightarrow, r, "\eps"]
&
1_A
\end{tikzcd}
\]
for $\alpha$ and the 3-cell
\[
\begin{tikzcd}
FU
\arrow[Rightarrow, r, "F \eta U"]
\arrow[Rightarrow, dr, "1_{FU}", swap, ""{name=fin, 
middle}]
\arrow[dr, bend left, phantom, "s^{-1}U \Ldar"{middle}]
&
|[alias=FUFU]|FUFU
\arrow[Rightarrow, d, "\eps FU"]
\arrow[Rightarrow, r, "FU \eps"]
&
FU
\arrow[Rightarrow, d, "\eps"]
\arrow[dl, phantom, "\Rrightarrow"{rotate=225}, 
"{\eps_\eps}^{-1}"{near start}]
\\
&
FU
\arrow[Rightarrow, r, "\eps", swap]
&
1_A
\end{tikzcd}
\]
for $\beta$.
We ask whether the 3-cells
\[
\begin{tikzcd}
F
\arrow[Rightarrow, r, "F \eta"]
\arrow[Rightarrow, rrdd, "1_F", swap, ""{name=fin2, 
middle}]
&
FUF
\arrow[Rightarrow, r, "F \eta UF"]
\arrow[Rightarrow, dr, "1_{FUF}", swap, ""{name=fin, 
middle}]
\arrow[dr, bend left, phantom, "FtF \Ldar"{middle}]
&
|[alias=FUFUF]|FUFUF
\arrow[Rightarrow, d, "FU \eps F"] 
\\
&
&
|[alias=FUF]|FUF
\arrow[Rightarrow, d, "\eps F"]
\\
&
&
F
\arrow[from=FUF, to=fin2, phantom, 
"\Rrightarrow"{rotate=225}, 
"s^{-1}"{near start}]
\end{tikzcd}
\]
and
\[
\begin{tikzcd}
&
FUF
\arrow[Rightarrow, r, "F \eta U F"]
\arrow[Rightarrow, dr, "1_{FUF}", swap, ""{name=fin, 
middle}]
\arrow[dr, bend left, phantom, "s^{-1}UF \Ldar"{middle}]
&
|[alias=FUFUF]|FUFUF
\arrow[Rightarrow, d, "\eps FUF"]
\arrow[Rightarrow, r, "FU \eps F"]
&
FUF
\arrow[Rightarrow, d, "\eps F"]
\arrow[dl, phantom, "\Rrightarrow"{rotate=225}, 
"{\eps_{\eps F}}^{-1}"{near start}]
\\
F
\arrow[Rightarrow, ur, "F \eta"]
\arrow[Rightarrow, rrr, bend right, "1_F", swap]
\arrow[rr, phantom, "\Rrightarrow"{rotate=225, 
above}, 
"s^{-1}"{above}]
&
&
FUF
\arrow[Rightarrow, r, "\eps F", swap]
&
F
\end{tikzcd}
\]
are equal. Pasting $s$ and $F\eta_\eta$, we can 
equivalently ask whether the 3-cells
\begin{equation}
\label{diag:tmp1}
\begin{tikzcd}
F
\arrow[Rightarrow, d, "F \eta", swap]
\arrow[Rightarrow, r, "F \eta"]
&
FUF
\arrow[Rightarrow, d, "FUF \eta"]
\arrow[dl, phantom, "\Rrightarrow"{rotate=225}, "F 
\eta_\eta"{near 
start}]
&
\\
FUF
\arrow[Rightarrow, r, "F \eta UF"]
\arrow[Rightarrow, dr, "1_{FUF}", swap, ""{name=fin, 
middle}]
\arrow[dr, bend left, phantom, "FtF \Ldar"{middle}]
&
|[alias=FUFUF]|FUFUF
\arrow[Rightarrow, d, "FU \eps F"] 
&
\\
&
|[alias=FUF]|FUF
\arrow[Rightarrow, r, "\eps F"]
&
F
\\
&
&
\end{tikzcd}
\end{equation}
and
\begin{equation}
\label{diag:tmp2}
\begin{tikzcd}
F
\arrow[Rightarrow, d, "F \eta", swap]
\arrow[Rightarrow, r, "F \eta"]
&
FUF
\arrow[Rightarrow, d, "FUF \eta"]
\arrow[dl, phantom, "\Rrightarrow"{rotate=225}, 
"F{\eta_{\eta}}"{near start}]
&
\\
FUF
\arrow[Rightarrow, r, "F \eta U F"]
\arrow[Rightarrow, dr, "1_{FUF}", swap, ""{name=fin, 
middle}]
\arrow[dr, bend left, phantom, "s^{-1}UF \Ldar"{middle}]
&
|[alias=FUFUF]|FUFUF
\arrow[Rightarrow, d, "\eps FUF"]
\arrow[Rightarrow, r, "FU \eps F"]
&
FUF
\arrow[Rightarrow, d, "\eps F"]
\arrow[dl, phantom, "\Rrightarrow"{rotate=225}, 
"{\eps_{\eps F}}^{-1}"{near start}]
\\
&
FUF
\arrow[Rightarrow, r, "\eps F", swap]
&
F
\end{tikzcd}
\end{equation}
are equal. Using the first coherence axiom, the 
diagram~\eqref{diag:tmp1} is equal to
\begin{equation}
\label{diag:tmp3}
\begin{tikzcd}
F
\arrow[Rightarrow, r, "F \eta"]
&
FUF
\arrow[Rightarrow, r, "FUF \eta"]
\arrow[Rightarrow, dr, "1_{FUF}", swap, ""{name=fin, 
middle}]
&
|[alias=FUFUF]|FUFUF
\arrow[Rightarrow, d, "FU \eps F"] 
&
\\
&
&
|[alias=FUF]|FUF
\arrow[Rightarrow, r, "\eps F"]
&
F
\\
&
&
&
\arrow[phantom, from=FUFUF, to=fin, 
"\Rrightarrow"{near 
end, rotate=225}, 
"FU s^{-1}"{near 
start}]
\end{tikzcd}
\end{equation}
Let us take diagrams~\eqref{diag:tmp2} and 
\eqref{diag:tmp3} and paste $\eps_{\eps F}$ and 
$\eps_{F\eta}$. The resulting diagrams
\begin{equation}
\label{diag:tmp4}
\begin{tikzcd}
F
\arrow[Rightarrow, d, "F \eta", swap]
\arrow[Rightarrow, r, "F \eta"]
&
FUF
\arrow[Rightarrow, d, "FUF \eta", swap]
\arrow[Rightarrow, dr, "\eps F"]
\arrow[dl, phantom, "\Rrightarrow"{rotate=225}, 
"F{\eta_{\eta}}"{near start}]
&
\\
FUF
\arrow[Rightarrow, r, "F \eta U F"]
\arrow[Rightarrow, dr, "1_{FUF}", swap, ""{name=fin, 
middle}]
&
|[alias=FUFUF]|FUFUF
\arrow[Rightarrow, d, "\eps FUF", swap]
&
F
\arrow[Rightarrow, dl, "F \eta"]
\arrow[phantom, l, "\Rrightarrow"{rotate=180, above}, 
"\eps_{F \eta}"{above}]
\\
&
FUF
\arrow[Rightarrow, r, "\eps F", swap]
&
F
\arrow[phantom, from=FUFUF, to=fin, 
"\Rrightarrow"{near 
end, rotate=225}, 
"s^{-1} UF"{near 
start}]
\end{tikzcd}
\end{equation}
and
\begin{equation}
\begin{tikzcd}
& &
F
\arrow[Rightarrow, r, "F \eta"]
\arrow[phantom, d, "\Rrightarrow"{rotate=-90}, 
"\eps_{F \eta}"{near start}]
&
FUF
\arrow[Rightarrow, dr, "\eps F"]
\arrow[d, phantom, "\Rrightarrow"{rotate=-90}, 
"\eps_{\eps F}"{near start}]
&
\\
F
\arrow[Rightarrow, r, "F \eta"]
&
FUF
\arrow[Rightarrow, r, "F \eta UF"]
\arrow[Rightarrow, ur, "\eps F", 
""{name=beg, 
middle}]
\arrow[Rightarrow, rr, "1_{FUF}", swap, ""{name=fin, 
middle}, shiftarr={yshift=-10ex}]
&
|[alias=FUFUF]|FUFUF
\arrow[Rightarrow, r, "FU \eps F"]
\arrow[Rightarrow, ur, "\eps FUF"]
&
FUF
\arrow[Rightarrow, r, "\eps F"]
&
F
\arrow[phantom, from=FUFUF, to=fin, 
"\Rrightarrow"{near 
end, rotate=-90}, 
"FU s^{-1}"{near 
start}]
\end{tikzcd}
\end{equation}
are equal by using the identities
\[
\begin{tikzcd}
FUF
\arrow[Rightarrow, r]
\arrow[Rightarrow, d]
\arrow[Rightarrow, dd, shiftarr={xshift=-15ex}, 
""{middle, name=fin}]
&
F
\arrow[Rightarrow, d]
\arrow[dl, phantom, "\Rrightarrow"{rotate=225, 
above}, 
"\eps_{F \eta}"{above}]
\\
|[alias=FUFUF]|FUFUF
\arrow[Rightarrow, r]
\arrow[Rightarrow, d]
&
FUF
\arrow[Rightarrow, d]
\arrow[dl, phantom, "\Rrightarrow"{rotate=225, 
above}, 
"\eps_{\eps F}"{above}]
\\
FUF
\arrow[Rightarrow, r]
&
F
\arrow[phantom, from=FUFUF, to=fin, 
"\Rrightarrow"{above, rotate=225}, "FUs^{-1}"{above}]
\end{tikzcd}
=
\begin{tikzcd}
F
\arrow[Rightarrow, r, "F \eta"]
\arrow[Rightarrow, dr, "1_F", swap, ""{name=fin, 
middle}]
\arrow[dr, bend left, phantom, "s^{-1} \Ldar"{middle}]
&
|[alias=FUF]|FUF
\arrow[Rightarrow, d, "\eps F"] 
\\
&
F
\end{tikzcd}
\]
and
\[
\begin{tikzcd}
F
\arrow[Rightarrow, r, "F \eta"]
\arrow[Rightarrow, d, "F \eta", swap]
&
FUF
\arrow[Rightarrow, r, "\eps F"]
\arrow[Rightarrow, d, "FUF \eta"]
\arrow[dl, phantom, "\Rrightarrow"{rotate=225, 
above}, 
"F \eta_{\eta}"{above}]
&
F
\arrow[Rightarrow, d, "F \eta"]
\arrow[dl, phantom, "\Rrightarrow"{rotate=225, 
above}, 
"\eps_{F \eta}"{above}]
\\
FUF
\arrow[Rightarrow, r, "F \eta UF", swap]
\arrow[Rightarrow, rr, shiftarr={yshift=-10ex}, 
"1_{FUF}", swap, ""{middle, name=fin}]
&
|[alias=FUFUF]|FUFUF
\arrow[Rightarrow, r, "\eps FUF", swap]
&
FUF
\arrow[phantom, from=FUFUF, to=fin, 
"\Rrightarrow"{rotate=-90, near end}, "s^{-1} 
UF"{near 
start}]
\end{tikzcd}
=
\begin{tikzcd}
F
\arrow[Rightarrow, r, "F \eta"]
\arrow[Rightarrow, dr, "1_F", swap, ""{name=fin, 
middle}]
\arrow[dr, bend left, phantom, "s^{-1} \Ldar"{middle}]
&
|[alias=FUF]|FUF
\arrow[Rightarrow, d, "\eps F"] 
\\
&
F
\end{tikzcd}
\]
The proof is therefore finished.
\end{proof}

\begin{lemma}[The implication (1) $\implies$ (2)]
Suppose that $F \adj U: A \to X$ is a 
pseudoadjunction 
in a $\Gray$-category $\gkat{K}$ with
\[
\begin{tikzcd}
X
\arrow[r, "F"]
\arrow[dr, "1", swap, ""{name=beg, middle}]
\arrow[dr, bend left, phantom, "\eta \uRar"{middle}]
&
|[alias=A]|A
\arrow[d, "U", ""{name=fin, middle}]
\\
&
X
\end{tikzcd}
\quad
\begin{tikzcd}
A
\arrow[d, "U", swap]
\arrow[dr, "1_A", ""{name=fin, middle}]
\arrow[dr, bend right, phantom, "\eps \uRar"{middle}]
&
\\
|[alias=X]|X
\arrow[r, "F", swap]
&
A
\end{tikzcd}
\quad
\begin{tikzcd}
F
\arrow[Rightarrow, r, "F \eta"]
\arrow[Rightarrow, dr, "1_F", swap, ""{name=fin, 
middle}]
\arrow[dr, bend left, phantom, "s \Urar"{middle}]
&
|[alias=FUF]|FUF
\arrow[Rightarrow, d, "\eps F"] 
\\
&
F
\end{tikzcd}
\quad
\begin{tikzcd}
U
\arrow[Rightarrow, r, "\eta U"]
\arrow[Rightarrow, dr, "1_U", swap, ""{name=fin, 
middle}]
\arrow[dr, bend left, phantom, "t \Ldar"{middle}]
&
|[alias=UFU]|UFU
\arrow[Rightarrow, d, "U \eps"] 
\\
&
U
\end{tikzcd}
\]
Then
\[
\begin{tikzcd}
X
\arrow[r, "F"]
\arrow[dr, "1", swap, ""{name=beg, middle}]
\arrow[dr, bend left, phantom, "\eta \uRar"{middle}]
&
|[alias=A]|A
\arrow[d, "U", ""{name=fin, middle}]
\\
&
X
\end{tikzcd}
\]
is an absolute left pseudolifting.
\end{lemma}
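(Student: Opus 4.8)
The plan is to establish the universal property of a left pseudolifting directly, by pseudo-mate calculus. By the duality conventions, a left pseudolifting of $1_X$ through $U$ is a left pseudoextension of $1_X$ along $U$ in $\gkat{K}^\op$; unwinding that definition back into $\gkat{K}$ (and using the Remark, which turns our pseudoadjunction into $U \adj F$ in $\gkat{K}^\op$ with the same $\eta$), what I must show is that for every $1$-cell $K\colon X \to A$ and every $2$-cell $f\colon 1_X \Rightarrow UK$ there are a $2$-cell $f^\sharp\colon F \Rightarrow K$ and an isomorphism $3$-cell $\mu(f)\colon f \Rrightarrow (U f^\sharp)\cdot\eta$, enjoying the usual one-dimensional universality against all competitors $(k,\omega)$ with $k\colon F \Rightarrow K$ and $\omega\colon f \Rrightarrow (Uk)\cdot\eta$; and that all of this persists after precomposition by an arbitrary $G\colon C \to X$.

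First I would define $f^\sharp$ as the adjunct $f^\sharp := (\eps K)\cdot(F f)$, where $Ff\colon F \Rightarrow FUK$ and $\eps K\colon FUK \Rightarrow K$. To produce $\mu(f)$ I compute $(U f^\sharp)\cdot\eta = (U\eps K)\cdot(UFf)\cdot\eta$ and rewrite the tail $(UFf)\cdot\eta$ by the $\Gray$-interchange isomorphism $\eta_f\colon (UFf)\cdot\eta \Rrightarrow (\eta\, UK)\cdot f$ (the cell $\beta_\alpha$ with $\alpha = f$ and $\beta = \eta$). After this, $(U\eps K)\cdot(\eta\,UK) = \big((U\eps)\cdot(\eta U)\big)K$ collapses to $1_{UK}$ upon whiskering the second triangle isomorphism, $tK\colon \big((U\eps)\cdot(\eta U)\big)K \Rrightarrow 1_{UK}$. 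The resulting composite is an isomorphism $(U f^\sharp)\cdot\eta \Rrightarrow f$, and I set $\mu(f)$ to be its inverse.

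The one-dimensional universal property I would then derive from full faithfulness (on $3$-cells) of the pseudo-mate functor $\gkat{K}(X,A)(F,K) \to \gkat{K}(X,X)(1_X, UK)$, $g \mapsto (Ug)\cdot\eta$. Its inverse sends a $2$-cell $h$ to $(\eps K)\cdot(Fh)$, and on $3$-cells acts by $F$-whiskering; the identification of the round-trip $g \mapsto (\eps K)\cdot(FUg)\cdot(F\eta)$ with $g$ is furnished by an interchange isomorphism $(\eps K)\cdot(FUg) \Rrightarrow g\cdot(\eps F)$ followed by the first triangle isomorphism $s\colon (\eps F)\cdot(F\eta) \Rrightarrow 1_F$. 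Given a competitor $(k,\omega)$, transporting $\omega$ across the isomorphism $\mu(f)$ and invoking full faithfulness yields the required unique $\widehat\omega\colon k \Rrightarrow f^\sharp$.

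Absoluteness is then immediate: for any $G\colon C \to X$ the precomposition $2$-functor $(\blank)G$ carries $\eps$, the triangle isomorphisms $s,t$, and the interchange cells to the analogous data, so the identical formula $f^\sharp := (\eps K)\cdot(Ff)$ — now with $K\colon C \to A$ and $f\colon G \Rightarrow UK$ — exhibits $\eta G\colon G \Rightarrow UFG$ as a left pseudolifting of $G$ through $U$. The main obstacle is the full-faithfulness step: checking that the two mate assignments on $3$-cells are mutually inverse is a diagram chase that must invoke \emph{both} coherence axioms of the pseudoadjunction, exactly as the triangle isomorphisms $s$ and $t$ were deployed in the already-established implication (3) $\implies$ (1).
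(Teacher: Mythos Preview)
Your argument is correct in outline, but you have proved the dual of what the paper proves. The word ``pseudolifting'' in the lemma statement is a slip: the lemma is labelled ``(1) $\implies$ (2)'', and item (2) of Theorem~\ref{thm:pseudo-benabou} is that $\eta$ exhibits $U$ as an absolute left \emph{pseudoextension} of $1_X$ along $F$. The paper's proof accordingly works with an arbitrary $G\colon X \to Y$, a $2$-cell $g\colon G \Rightarrow KF$, sets $g^\sharp := (K\eps)\cdot(gU)$, and builds $\mu(g)$ from the interchange cell $g_\eta^{-1}$ together with $Ks^{-1}$. You instead unwind ``pseudolifting'' and prove (1) $\implies$ (4), with $f^\sharp := (\eps K)\cdot(Ff)$ and $\mu(f)$ built from $\eta_f$ and $tK$. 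The two arguments are mirror images under the $\gkat{K} \leftrightarrow \gkat{K}^\op$ duality already discussed in the paper, so nothing is lost; but you should be aware that you are not reproducing the paper's proof, only its dual.

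Modulo that, your strategy matches the paper's closely. Where the paper writes down an explicit formula for $\widehat{\omega}$ (built from $\omega U$, $k_\eps^{-1}$, and $Gt^{-1}$ in its notation) and then verifies by a direct diagram chase both that it satisfies the required equation and that it is the only such $3$-cell, you package the same computation as ``full faithfulness of the mate functor''. This is a legitimate and slightly cleaner way to organise the same work: essential surjectivity of $k \mapsto (Uk)\cdot\eta$ gives the pair $(f^\sharp,\mu(f))$, and full faithfulness gives existence and uniqueness of $\widehat{\omega}$ in one stroke. You are right that establishing the two round-trip identifications uses both coherence axioms of the pseudoadjunction --- in the paper's proof the first axiom is used to show the explicit $\widehat{\omega}$ works, and the second (via a pasting involving $\eps_\eps$ and $sU$) is used for uniqueness. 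Your handling of absoluteness by precomposition is also in line with the paper, which simply carries the extra $G$ through every formula from the start.
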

\begin{proof}
We need to show that for each $G: X \to Y$ and $g: G 
\Rightarrow KF$ there is a 2-cell $g^\sharp: GU 
\Rightarrow K$ and an isomorphism
\[
\begin{tikzcd}
G
\arrow[Rightarrow, r, "G\eta"]
\arrow[Rightarrow, dr, "G", swap, ""{name=fin, 
middle}]
\arrow[dr, bend left, phantom, "\mu(g) \Ldar"{middle}]
&
|[alias=GUF]|GUF
\arrow[Rightarrow, d, "g^\sharp F"] 
\\
&
KF
\end{tikzcd}
\]
satisfying that for each $k: GU \Rightarrow K$ and
\[
\begin{tikzcd}
G
\arrow[Rightarrow, r, "G \eta"]
\arrow[Rightarrow, dr, "g", swap, ""{name=fin, 
middle}]
\arrow[dr, bend left, phantom, "\omega \Ldar"{middle}]
&
|[alias=GUF]|GUF
\arrow[Rightarrow, d, "kF"] 
\\
&
KF
\end{tikzcd}
\]
there is a unique 3-cell $\lift{\omega}: k 
\Rrightarrow g^\sharp$ such that
\[
\begin{tikzcd}
G
\arrow[Rightarrow, r, "G \eta"]
\arrow[Rightarrow, dr, "g", swap, ""{name=fin1, 
middle}]
&
|[alias=GUF]|GUF
\arrow[Rightarrow, d, "g^\sharp F", swap, 
""{name=fin2, middle}] 
\\
&
|[alias=KF]|KF
\arrow[phantom, from=GUF, to=fin1, 
"\Rrightarrow"{near 
end, rotate=225}, 
"\mu(g)"{near 
start}]
\arrow[Rightarrow, from=GUF, to=KF, 
shiftarr={xshift=10ex}, 
"kF", ""{name=beg2, middle}]
\arrow[from=beg2, to=fin2, "\Rrightarrow"{below, 
rotate=180}, 
"\lift{\omega} F"{below}, phantom]
\end{tikzcd}
=
\begin{tikzcd}
G
\arrow[Rightarrow, r, "G \eta"]
\arrow[Rightarrow, dr, "g", swap, ""{name=fin, 
middle}]
\arrow[dr, bend left, phantom, "\omega \Ldar"{middle}]
&
|[alias=GUF]|GUF
\arrow[Rightarrow, d, "kF"] 
\\
&
KF
\end{tikzcd}
\]
We shall define $g^\sharp: GU \Rightarrow K$ as the 
2-cell
\[
\begin{tikzcd}
GU
\arrow[Rightarrow, r, "gU"]
&
KFU
\arrow[Rightarrow, r, "K \eps"]
&
K
\end{tikzcd}
\]
Then, for the 2-cell $g: G \Rightarrow KF$ we define 
$\mu(g)$ to be the 3-cell
\[
\begin{tikzcd}
G
\arrow[Rightarrow, r, "G \eta"]
\arrow[Rightarrow, d, "g", swap]
&
GUF
\arrow[Rightarrow, d, "g UF"]
\arrow[dl, phantom, "\Rrightarrow"{rotate=225, 
above}, 
"{g_{\eta}}^{-1}"{above}]
\\
KF
\arrow[Rightarrow, r, "KF \eta"]
\arrow[Rightarrow, dr, "1_{KF}", swap, ""{name=fin, 
middle}]
&
|[alias=KFUF]|KFUF
\arrow[Rightarrow, d, "K \eps F"] 
\\
&
KF
\arrow[phantom, from=KFUF, to=fin, 
"\Rrightarrow"{near 
end, rotate=225}, 
"Ks^{-1}"{near 
start}]
\end{tikzcd}
\]
Now given a 3-cell
\[
\begin{tikzcd}
G
\arrow[Rightarrow, r, "G \eta"]
\arrow[Rightarrow, dr, "g", swap, ""{name=fin, 
middle}]
\arrow[dr, bend left, phantom, "\omega \Ldar"{middle}]
&
|[alias=GUF]|GUF
\arrow[Rightarrow, d, "kF"] 
\\
&
KF
\end{tikzcd}
\]
we will show that the ``lifted'' 3-cell 
$\lift{\omega}: k \Rrightarrow g^\sharp$
is the 3-cell
\[
\begin{tikzcd}
GU
\arrow[Rightarrow, d, "gU", swap]
\arrow[Rightarrow, r, "1_{GU}"]
&
GU
\arrow[Rightarrow, d, "G \eta U"]
\arrow[Rightarrow, dd, shiftarr={xshift=15ex}, 
""{middle, name=beg}, "1_{GU}"]
\arrow[dl, phantom, "\Rrightarrow"{rotate=135, 
above}, 
"\omega U"{above}]
\\
KFU
\arrow[Rightarrow, d, "K \eps", swap]
&
|[alias=GUFU]|GUFU
\arrow[Rightarrow, l, "kFU"]
\arrow[Rightarrow, d, "GU \eps"]
\arrow[dl, phantom, "\Rrightarrow"{rotate=135, 
above}, 
"{k_\eps}^{-1}"{above}]
\\
K
&
GU
\arrow[Rightarrow, l, "k"]
\arrow[phantom, from=beg, to=GUFU, 
"\Rrightarrow"{rotate=180, above}, "Gt^{-1}"{above}]
\end{tikzcd}
\]
Indeed, observe that the 3-cell
\[
\begin{tikzcd}
G
\arrow[Rightarrow, r, "G \eta"]
\arrow[Rightarrow, dr, "g", swap, ""{name=fin1, 
middle}]
&
|[alias=GUF]|GUF
\arrow[Rightarrow, d, "g^\sharp F", swap, 
""{name=fin2, middle}] 
\\
&
|[alias=KF]|KF
\arrow[phantom, from=GUF, to=fin1, 
"\Rrightarrow"{near 
end, rotate=225}, 
"\mu(g)"{near 
start}]
\arrow[Rightarrow, from=GUF, to=KF, 
shiftarr={xshift=10ex}, 
"kF", ""{name=beg2, middle}]
\arrow[from=beg2, to=fin2, "\Rrightarrow"{below, 
rotate=180}, 
"\lift{\omega} F"{below}, phantom]
\end{tikzcd}
\]
is the composite
\[
\begin{tikzcd}[
execute at end picture={
  \begin{pgfonlayer}{background}
  \foreach \Nombre in  {n1,n2,GUFUF,n3}
    {\coordinate (\Nombre) at (\Nombre.center);}
  \fill[greeo!20] 
    (n1) -- (n2) -- (GUFUF) -- (n3) -- cycle;
    \end{pgfonlayer}
}
]
|[alias=n1]|G
\arrow[Rightarrow, r, "G \eta"]
\arrow[Rightarrow, d, "g", swap]
&
GUF
\arrow[Rightarrow, d, "g UF"]
\arrow[Rightarrow, r, "1_{GUF}"]
\arrow[dl, phantom, "\Rrightarrow"{rotate=225, 
above}, 
"{g_{\eta}}^{-1}"{above}]
&
|[alias=n2]|GUF
\arrow[Rightarrow, d, "G \eta UF"]
\arrow[Rightarrow, dd, shiftarr={xshift=15ex}, 
""{middle, name=beg}, "1_{GUF}"]
\arrow[dl, phantom, "\Rrightarrow"{rotate=180, 
above}, 
"\omega UF"{above}]
\\
|[alias=n3]|KF
\arrow[Rightarrow, r, "KF \eta"]
\arrow[Rightarrow, dr, "1_{KF}", swap, ""{name=fin, 
middle}]
&
|[alias=KFUF]|KFUF
\arrow[Rightarrow, d, "K \eps F"] 
&
|[alias=GUFUF]|GUFUF
\arrow[Rightarrow, l, "kFUF"]
\arrow[Rightarrow, d, "GU \eps F"]
\arrow[dl, phantom, "\Rrightarrow"{rotate=135, 
above}, 
"{k_{\eps F}}^{-1}"{above}]
\\
&
KF
\arrow[phantom, from=KFUF, to=fin, 
"\Rrightarrow"{near 
end, rotate=225}, 
"Ks^{-1}"{near 
start}]
&
GUF
\arrow[Rightarrow, l, "kF"]
\arrow[phantom, from=beg, to=GUFUF, 
"\Rrightarrow"{rotate=180, above}, "GtF^{-1}"{above}]
\end{tikzcd}
\]
which is equal (transforming the green subdiagram) to 
the 3-cell
\[
\begin{tikzcd}[
execute at end picture={
  \begin{pgfonlayer}{background}
  \foreach \Nombre in  {n1,n2,n3,n4}
    {\coordinate (\Nombre) at (\Nombre.center);}
  \fill[greeo!20] 
    (n1) -- (n2) -- (n3) -- (n4) -- cycle;
    \end{pgfonlayer}
}
]
&
&
&
|[alias=n1]|G
\arrow[Rightarrow, dd, "G \eta"]
\arrow[Rightarrow, dl, "G \eta"]
\arrow[Rightarrow, dll, "g", swap, ""{middle, 
name=fin1}]
\arrow[ddl, phantom, "\Rrightarrow"{rotate=135, 
above}, 
"{G \eta_{\eta}}^{-1}"{above}]
\\
|[alias=m1]|{}
&
|[alias=n2]|KF
\arrow[Rightarrow, d, "KF \eta", swap]
\arrow[Rightarrow, dd, shiftarr={xshift=-16ex}, 
""{middle, name=fin}, "1_{KF}", swap]
&
|[alias=m2]|GUF
\arrow[Rightarrow, d, "GUF \eta", swap]
\arrow[Rightarrow, l, "kF"]
\arrow[dl, phantom, "\Rrightarrow"{rotate=135, 
above}, 
"{k_{F \eta}}^{-1}"{above}]
\arrow[phantom, to=fin1, "\Rrightarrow"{rotate=135, 
above}, "\omega"{above}]
&
\\
&
|[alias=n3]|KFUF
\arrow[Rightarrow, d, "K \eps F", swap]
\arrow[Rightarrow, phantom, to=fin, 
"\Rrightarrow"{rotate=180, above}, "{Ks}^{-1}"{above}]
&
|[alias=GUFUF]|GUFUF
\arrow[Rightarrow, d, "GU \eps F", swap]
\arrow[Rightarrow, l, "k FUF"]
\arrow[dl, phantom, "\Rrightarrow"{rotate=135, 
above}, 
"{k_{\eps F}}^{-1}"{above}]
&
|[alias=n4]|GUF
\arrow[Rightarrow, l, "G \eta UF", swap]
\arrow[Rightarrow, dl, "1_{GUF}", ""{middle, 
name=beg1}]
\arrow[Rightarrow, phantom, from=beg1, to=GUFUF, 
"\Rrightarrow"{rotate=135, above}, 
"{GtF}^{-1}"{above}]
\\
|[alias=m4]|{}
&
KF
&
|[alias=m3]|GUF
\arrow[Rightarrow, l, "kF"]
&
\end{tikzcd}
\]
and, transforming ${Ks}^{-1}$ to ${GUs}^{-1}$, the 
3-cell
\[
\begin{tikzcd}[
execute at end picture={
  \begin{pgfonlayer}{background}
    \foreach \Nombre in  {m1,m2,m3,m4}
      {\coordinate (\Nombre) at (\Nombre.center);}
    \fill[cof!20] 
      (m1) -- (m2) -- (m3) -- (m4) -- cycle;
    \end{pgfonlayer}
}
]
&
&
&
|[alias=n1]|G
\arrow[Rightarrow, dd, "G \eta"]
\arrow[Rightarrow, dl, "G \eta"]
\arrow[Rightarrow, dll, "g", swap, ""{middle, 
name=fin1}]
\arrow[ddl, phantom, "\Rrightarrow"{rotate=135, 
above}, 
"{G \eta_{\eta}}^{-1}"{above}]
\\
|[alias=m1]|{}
&
|[alias=n2]|KF
\arrow[Rightarrow, d, "KF \eta", swap]
\arrow[Rightarrow, dd, shiftarr={xshift=-16ex}, 
""{middle, name=fin}, "1_{KF}", swap]
&
|[alias=m2]|GUF
\arrow[Rightarrow, d, "GUF \eta", swap]
\arrow[Rightarrow, l, "kF"]
\arrow[dl, phantom, "\Rrightarrow"{rotate=135, 
above}, 
"{k_{F \eta}}^{-1}"{above}]
\arrow[phantom, to=fin1, "\Rrightarrow"{rotate=135, 
above}, "\omega"{above}]
&
\\
&
|[alias=n3]|KFUF
\arrow[Rightarrow, d, "K \eps F", swap]
\arrow[Rightarrow, phantom, to=fin, 
"\Rrightarrow"{rotate=180, above}, "{Ks}^{-1}"{above}]
&
|[alias=GUFUF]|GUFUF
\arrow[Rightarrow, d, "GU \eps F", swap]
\arrow[Rightarrow, l, "k FUF"]
\arrow[dl, phantom, "\Rrightarrow"{rotate=135, 
above}, 
"{k_{\eps F}}^{-1}"{above}]
&
|[alias=n4]|GUF
\arrow[Rightarrow, l, "G \eta UF", swap]
\arrow[Rightarrow, dl, "1_{GUF}", ""{middle, 
name=beg1}]
\arrow[Rightarrow, phantom, from=beg1, to=GUFUF, 
"\Rrightarrow"{rotate=135, above}, 
"{GtF}^{-1}"{above}]
\\
|[alias=m4]|{}
&
KF
&
|[alias=m3]|GUF
\arrow[Rightarrow, l, "kF"]
&
\end{tikzcd}
\]
is equal to
\[
\begin{tikzcd}[arrows=Rightarrow,
execute at end picture={
  \begin{pgfonlayer}{background}
  \foreach \Nombre in  {m1,m2,m3}
    {\coordinate (\Nombre) at (\Nombre.center);}
  \fill[pur!20] 
    (m1) -- (m2) -- (m3)  -- cycle;  
  \foreach \Nombre in  {n1,n2,GUFUF}
    {\coordinate (\Nombre) at (\Nombre.center);}
  \fill[cof!20] 
    (n1) -- (n2) -- (GUFUF)  -- cycle;
    \end{pgfonlayer}
}
]
|[alias=m1]|G
\arrow[rrr, "G \eta"]
\arrow[dr, "G \eta"]
\arrow[dddrrr, shiftarr={xshift=-8ex,yshift=-8ex}, 
"g", swap, ""{middle, name=fin2}]
&
&
&
|[alias=m2]|GUF
\arrow[dl, "G \eta UF"]
\arrow[ddd, "1_{GUF}", ""{middle, name=beg1}]
\arrow[dll, phantom, "\Rrightarrow"{rotate=225, 
above}, "{G\eta_{\eta}}^{-1}"{above}]
\arrow[from=beg1, to=GUFUF, phantom, 
"\Rrightarrow"{rotate=225, above}, 
"{GtF}^{-1}"{above}]
\\
&
|[alias=n1]|GUF
\arrow[r, "GUF \eta"]
\arrow[dr, "1_{GUF}", swap, ""{middle, name=fin1}]
&
|[alias=GUFUF]|GUFUF
\arrow[d, "GU \eps F"]
\arrow[to=fin1, phantom, "\Rrightarrow"{rotate=225, 
above}, 
"{GUs}^{-1}"{above}]
&
\\
&
&
|[alias=n2]|GUF
\arrow[rd, "kF"]
\arrow[to=fin2, phantom, "\Rrightarrow"{rotate=225, 
above}, 
"\omega"{above}]
&
\\
&
&
&
|[alias=m3]|KF
\end{tikzcd}
\]
And since the whole coloured subdiagram equals 
identity by 
the pseudomonad coherence axiom, the diagram 
simplifies to 
$\omega$, showing that our choice of $\lift{\omega}$ 
was 
correct. Indeed, the choice of $\lift{\omega}$ is 
even the 
only possible one: each diagram in the following 
series is 
equal to $\lift{\omega}$.
\begin{equation}
\begin{tikzcd}[arrows=Rightarrow]
GU
\arrow[r, bend left, "k", ""{middle, name=beg}]
\arrow[r, bend right, "g^\sharp", swap, ""{middle, 
name=fin}]
\arrow[r, phantom, "\lift{\omega} \Ddar"{middle}]
&
K
\end{tikzcd}
\end{equation}

\begin{equation}
\begin{tikzcd}[arrows=Rightarrow]
&
|[alias=GUFU]|GUFU
\arrow[dr, "GU \eps"]
\arrow[to=fin2, phantom, "\Rrightarrow"{rotate=-90}, 
"Gt"{near start}]
&
&
\\
GU
\arrow[ur, "G \eta U"]
\arrow[rr, "1_{GU}", swap, ""{middle, name=fin2}]
\arrow[d, "gU", swap]
\arrow[rr, shiftarr={yshift=24ex}, "1_{GU}", 
""{middle, name=beg1}]
&
&
|[alias=GU]|GU
\arrow[r, "k"]
\arrow[d, "gU", swap]
\arrow[dll, phantom, "\Rrightarrow"{rotate=225, 
above}, "1_{gU}"{above}]
&
K
\\
KFU
\arrow[rr, "1_{KFU}", swap]
&
&
KFU
\arrow[ur, "K \eps", swap, ""{middle, name=fin1}]
&
\arrow[from=GU, to=fin1, phantom, 
"\Rrightarrow"{rotate=-45, below}, 
"\lift{\omega}"{above}]
\arrow[from=beg1, to=GUFU, phantom, 
"\Rrightarrow"{rotate=-90, near end}, 
"{Gt}^{-1}"{near 
start}]
\end{tikzcd}
\end{equation}

\begin{equation}
\begin{tikzcd}[arrows=Rightarrow]
&
|[alias=GUFU]|GUFU
\arrow[dr, "GU \eps"]
\arrow[d, "gUFU"]
&
&
\\
GU
\arrow[ur, "G \eta U"]
\arrow[d, "gU", swap]
\arrow[rr, shiftarr={yshift=24ex}, "1_{GU}", 
""{middle, name=beg1}]
\arrow[r, phantom, "\Rrightarrow"{near start, 
rotate=225}, 
"{g_{\eta U}}^{-1}"{near end}]
&
|[alias=KFUFU]|KFUFU
\arrow[rd, "KFU \eps"]
\arrow[r, phantom, "\Rrightarrow"{near start, 
rotate=225}, 
"{g_{U \eps}}^{-1}"{near end}]
&
|[alias=GU]|GU
\arrow[r, "k"]
\arrow[d, "gU", swap]
&
K
\\
KFU
\arrow[rr, "1_{KFU}", swap, ""{middle, name=fin2}]
\arrow[ur, "KF \eta U"]
&
&
KFU
\arrow[ur, "K \eps", swap, ""{middle, name=fin1}]
&
\arrow[from=GU, to=fin1, phantom, 
"\Rrightarrow"{rotate=-45, below}, 
"\lift{\omega}"{above}]
\arrow[from=beg1, to=GUFU, phantom, 
"\Rrightarrow"{rotate=-90, near end}, 
"{Gt}^{-1}"{near 
start}]
\arrow[from=KFUFU, to=fin2, phantom, 
"\Rrightarrow"{rotate=-90}, 
"KFt"{near 
start}]
\end{tikzcd}
\end{equation}

\begin{equation}
\begin{tikzcd}[arrows=Rightarrow]
&
|[alias=GUFU]|GUFU
\arrow[dr, "kFU", ""{middle, name=beg2}]
\arrow[d, "gUFU", swap]
\arrow[r, "GU \eps"]
&
GU
\arrow[dr, "k"]
\arrow[d, phantom, "\Rrightarrow"{rotate=-90}, 
"{k_\eps}^{-1}"{near start}]
&
\\
GU
\arrow[ur, "G \eta U"]
\arrow[d, "gU", swap]
\arrow[rru, shiftarr={yshift=16ex}, "1_{GU}", 
""{middle, name=beg1}]
\arrow[r, phantom, "\Rrightarrow"{near start, 
rotate=225}, "{g_{\eta U}}^{-1}"{near end}]
&
|[alias=KFUFU]|KFUFU
\arrow[rd, "KFU \eps", swap]
\arrow[r, "K \eps FU", swap]
\arrow[from=beg2, phantom, "\Rrightarrow"{rotate=225, 
above}, 
"\lift{\omega} FU"{above}]
&
|[alias=KFU]|KFU
\arrow[r, "K \eps", swap]
\arrow[d, phantom, "\Rrightarrow"{rotate=-90}, 
"K\eps_\eps"{near start}]
&
K
\\
KFU
\arrow[rr, "1_{KFU}", swap, ""{middle, name=fin2}]
\arrow[ur, "KF \eta U"]
&
&
KFU
\arrow[ur, "K \eps", swap, ""{middle, name=fin1}]
&
\arrow[from=beg1, to=GUFU, phantom, 
"\Rrightarrow"{rotate=-90, near end}, 
"{Gt}^{-1}"{near 
start}]
\arrow[from=KFUFU, to=fin2, phantom, 
"\Rrightarrow"{rotate=-90}, 
"KFt"{near 
start}]
\end{tikzcd}
\end{equation}

\begin{equation}
\begin{tikzcd}[arrows=Rightarrow]
&
|[alias=GUFU]|GUFU
\arrow[dr, "kFU", ""{middle, name=beg2}]
\arrow[d, "gUFU", swap]
\arrow[r, "GU \eps"]
&
GU
\arrow[dr, "k"]
\arrow[d, phantom, "\Rrightarrow"{rotate=-90}, 
"{k_\eps}^{-1}"{near start}]
&
\\
GU
\arrow[ur, "G \eta U"]
\arrow[d, "gU", swap]
\arrow[rru, shiftarr={yshift=16ex}, "1_{GU}", 
""{middle, name=beg1}]
\arrow[r, phantom, "\Rrightarrow"{near start, 
rotate=225}, "{g_{\eta U}}^{-1}"{near end}]
&
|[alias=KFUFU]|KFUFU
\arrow[r, "K \eps FU", swap]
\arrow[from=beg2, phantom, "\Rrightarrow"{rotate=225, 
above}, 
"\lift{\omega} FU"{above}]
&
|[alias=KFU]|KFU
\arrow[r, "K \eps", swap]
&
K
\\
KFU
\arrow[ur, "KF \eta U"]
\arrow[rrru, bend right, "1_{KFU}", swap, ""{middle, 
name=fin1}]
&
&
&
\arrow[from=beg1, to=GUFU, phantom, 
"\Rrightarrow"{rotate=-90, near end}, 
"{Gt}^{-1}"{near 
start}]
\arrow[from=KFUFU, to=fin1, phantom, 
"\Rrightarrow"{rotate=-90, near end}, 
"{KsU}^{-1}"]
\end{tikzcd}
\end{equation}

\begin{equation}
\begin{tikzcd}
GU
\arrow[Rightarrow, d, "gU", swap]
\arrow[Rightarrow, r, "1_{GU}"]
&
GU
\arrow[Rightarrow, d, "G \eta U"]
\arrow[Rightarrow, dd, shiftarr={xshift=15ex}, 
""{middle, name=beg}, "1_{GU}"]
\arrow[dl, phantom, "\Rrightarrow"{rotate=180, 
above}, 
"\omega U"{above}]
\\
KFU
\arrow[Rightarrow, d, "K \eps", swap]
&
|[alias=GUFU]|GUFU
\arrow[Rightarrow, l, "kFU"]
\arrow[Rightarrow, d, "GU \eps"]
\arrow[dl, phantom, "\Rrightarrow"{rotate=135, 
above}, 
"{k_\eps}^{-1}"{above}]
\\
K
&
GU
\arrow[Rightarrow, l, "k"]
\arrow[phantom, from=beg, to=GUFU, 
"\Rrightarrow"{rotate=180, above}, "Gt^{-1}"{above}]
\end{tikzcd}
\end{equation}
The proof is therefore complete.
\end{proof}

\begin{remark}
Having proved the implications (1) $\implies$ (2) and (3) $\implies$ (1), the proof of Theorem~\ref{thm:pseudo-benabou} is complete.
\end{remark}

%
%

\end{document}